\setlist[enumerate]{label=\rm{(\arabic*)}}
\setlist[enumerate,2]{label=\rm({\it\roman*})}
\setlist[itemize]{label=\raisebox{0.25ex}{\tiny$\bullet$}}
\newcommand{\p}{\operatorname{\mathbb{P}}}
\newcommand{\C}{\operatorname{\mathbb{C}}}
\newcommand{\Z}{\operatorname{\mathbb{Z}}}
\newcommand{\Ind}{\operatorname{Ind}}
\newcommand{\A}{\operatorname{\mathbb{A}}}
\newcommand{\Q}{\operatorname{\mathbb{Q}}}
\newcommand{\F}{\operatorname{\mathbb{F}}}
\newcommand{\dpol}{\operatorname{dpol}}
\newcommand{\car}{\operatorname{char}}
\newcommand{\aut}{\operatorname{Aut}}
\newcommand{\End}{\operatorname{End}}
\newcommand{\Bir}{\operatorname{Bir}}
\newcommand{\exc}{\operatorname{Exc}}
\newcommand{\dom}{\operatorname{dom}}
\newcommand{\Div}{\operatorname{Div}}
\newcommand{\Rat}{\operatorname{Rat}}
\theoremstyle{plain}
\newtheorem{theorem}{Theorem}
\newtheorem{lemma}[theorem]{Lemma}
\newtheorem{proposition}[theorem]{Proposition}
\newtheorem{corollary}[theorem]{Corollary}
\theoremstyle{definition}
\newtheorem*{definition}{Definition}
\newtheorem{example}[theorem]{Example}
\theoremstyle{remark}
\newtheorem{question}{Question}
\numberwithin{theorem}{section}
\title{Remarks on the degree growth of birational transformations}
\author{Christian Urech}
\date{June 2016}
\address{Mathematisches Institut\\ Universit\"at Basel\\ 4051 Basel\\ Switzerland}
\address{IRMAR\\ Universit\'e de Rennes 1\\ 35042 Rennes\\ France}
\email{christian.urech@unibas.ch}
\subjclass[2010]{14E07; 37P05; 32H50} 
\thanks{The author gratefully acknowledges support by the Swiss National Science Foundation Grant "Birational Geometry"  PP00P2 128422 /1 as well as by the Geldner-Stiftung, the FAG Basel, the Janggen P\"ohn-Stiftung and the State Secretariat for Education,
Research and Innovation of Switzerland}
\begin{document}
\maketitle

\begin{abstract}
We look at sequences of positive integers that can be realized as degree sequences of iterates of rational dominant maps of smooth projective varieties over arbitrary fields. New constraints on the degree growth of endomorphisms of the affine space and examples of degree sequences are displayed. We also show that the set of all degree sequences of rational maps is countable; this generalizes a result of Bonifant and Fornaess.
\end{abstract}

\tableofcontents

\section{Introduction and results}
\subsection{Groups of birational transformations and degree sequences}
Let $X_k$ be a projective variety defined over a field $k$; denote by $\Bir(X_k)$ the group of birational transformations of $X_k$. A group $\Gamma$ is called a {\it group of birational transformations} if there exists a field $k$ and a projective variety $X_k$ over $k$ such that $\Gamma\subset\Bir(X_k)$. More generally, one can consider  $\Rat(X_k)$, the monoid of dominant rational maps of $X_k$. Accordingly, we call a monoid $\Delta$ a {\it monoid of rational dominant transformations}, if there exists a field $k$ and a projective variety $X_k$ over $k$ such that $\Delta\subset\Bir(X_k)$. 

If $X_k$ is a smooth projective variety, an interesting tool to study the structure of monoids of rational dominant transformations are {\it degree functions}. Fix a polarization of $X_k$, i.e. an ample divisor class $H$ of $X_k$. Then one can associate to every element $f\in\Rat(X_k)$ its degree $\deg_H(f)\in\Z^+$ with respect to $H$, which is defined by
\[
\deg_H(f)=f^*H\cdot H^{d-1},
\]
where $d$ is the dimension of $X_k$ and $f^*H$ is the total transform of $H$ under $f$. For a smooth projective variety $X_k$ over a field of characteristic zero $k$, one has for $f,g\in\Bir(X_k)$
\[
\deg_H(f\circ g)\leq C(X_k, H)\deg_H(f)\deg_H(g),
\]
where $C(X_k,H)$ is a constant only depending on $X_k$ and the choice of polarization $H$ (see \cite{MR2180409}). For a generalization of this result to fields of positive characteristic, see \cite{Truong:2015aa}.

Let $f$ be a rational self map of $\p_k^d$. With respect to homogeneous coordinates $[x_0:\dots:x_d]$ of $\p_k^d$, $f$ is given by $[x_0:x_1:\dots:x_d]\mapsto [f_0:f_1:\dots:f_d]$, where $f_0,\dots, f_d\in k[x_0,\dots,x_d]$ are homogeneous polynomials of the same degree and without a common factor. We define $\deg(f)=\deg(f_i)$. Note that if $f$ is dominant, then $\deg(f)=\deg_H(f)$ for $H=\mathcal{O}(1)$. So in case $X_k=\p_k^d$ we can  extend the notion of degree to all rational self maps. Note that if $f$ is an endomorphism of $\A_k^d$ defined by $(x_1,\dots, x_d)\mapsto (f_1,\dots, f_d)$ with respect to affine coordinates $(x_1,\dots, x_d)$ of $\A_k^d$, where $f_1,\dots, f_d\in k[x_1,\dots, x_d]$,  then $\deg(f)$ is the maximal degree of the $f_i$. 

Let $\Delta\subset\Bir(X_k)$ be a finitely generated monoid of rational dominant transformations of a smooth projective variety $X_k$ with a finite set of generators $S$. We define 
\[
D_{S, H}\colon\Z^+\to\Z^+\]
by
 \[
D_{S,H}(n):=\max_{\delta\in B_S(n)}\{\deg_H(\gamma)\},
\]
where, $B_S(n)$ denotes all elements in $\Delta$ of word length $\leq n$ with respect to the generating set $S$. We call a map $\Z^+\to\Z^+$ that can be realized for some field $k$ and some $(X_k, H, \Delta, S)$ as such a function a {\it degree sequence}. 

Note that our definition of degree sequences includes in particular degree sequences that are given by finitely generated groups of birational transformations $\Gamma\subset\Bir(X_k)$.

In this paper we show that there exist only countably many degree sequences, display certain constraints on their growth and give some new examples. 

\subsection{Countability of degree sequences}
In \cite{MR1793690}, Bonifant and Fornaess proved that the set of sequences $\{d_n\}$ such that there exists a rational self map $f$ of $\p^d_{\C}$ satisfying $\deg(f^n)=d_n$ for all $n$, is countable, which answered a question of Ghys. We generalize the result of Bonifant and Fornaess to all degree sequences over all smooth projective varieties, all fields, all polarizations and all finite generating sets $S$ of finitely generated monoids of rational dominant maps:

\begin{theorem}\label{rationalcount}
The set of all degree sequences is countable.
\end{theorem}

\subsection{Previous results}
In dimension 2 the degree growth of birational transformations is well understood and is a helpful tool to understand the group structure of $\Bir(S_k)$ for projective surfaces $S_k$ over a field $k$. 

\begin{theorem}[Gizatullin; Cantat; Diller and Favre]\label{dim2}
Let $k$ be an algebraically closed field, $S_k$ a projective surface over $k$ with a fixed polarization $H$ and $f\in\Bir(S_k)$. Then one of the following is true:
\begin{itemize}
\item the set $\{\deg_H(f^n)\}$ is bounded;
\item $\deg_H(f^n)\sim cn$ for some positive constant $c$ and $f$ preserves a rational fibration;
\item $\deg_H(f^n)\sim cn^2$ for some positive constant $c$ and $f$ preserves an elliptic fibration;
\item $\deg_H(f^n)\sim \lambda^n $, where $\lambda$ is a Pisot or Salem number.
\end{itemize}
\end{theorem}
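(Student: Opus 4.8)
The plan is to convert the nonlinear problem of iterating a birational map into the \emph{linear} problem of iterating its induced action on a finite-dimensional cohomology group, and then to read the growth type off from the spectral and arithmetic properties of that linear action. The first step is regularization. The obstruction to functoriality is that the pullback $f^*$ on $\operatorname{NS}(S_k)$ need not satisfy $(f^n)^* = (f^*)^n$, because of cancellations created by curves collapsing into the indeterminacy locus. I would therefore invoke the theorem of Diller--Favre: after a finite sequence of blow-ups $\pi\colon X_k \to S_k$ the lift $\tilde f$ becomes \emph{algebraically stable}, i.e. no curve is eventually contracted into the indeterminacy set, and on such a model $(\tilde f^n)^* = (\tilde f^*)^n$ on $V := \operatorname{NS}(X_k)\otimes\R$. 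Since degrees with respect to different ample classes are comparable up to bounded multiplicative constants, and $\pi$ changes degrees only by such constants, the asymptotic type of $\deg_H(f^n)$ is the same as that of $\|A^n\|$ for the single linear operator $A := \tilde f^*$ on $V$; in particular the growth type is independent of the chosen polarization.

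The second step is the isometry classification. The operator $A$ preserves the intersection form $q$ on $V$, which by the Hodge index theorem has signature $(1, \rho-1)$ with $\rho = \dim V$. Thus $A$ is an isometry of a Lorentzian space and acts on the associated hyperbolic space $\mathbb{H}^{\rho-1}$, and I would run the standard trichotomy. If $A$ is elliptic its orbit is relatively compact, so $\|A^n\|$ is bounded, giving the first alternative. If $A$ is loxodromic it has a pair of isotropic eigenvectors with eigenvalues $\lambda > 1 > \lambda^{-1}$ and $\|A^n\| \sim \lambda^n$, the dynamical degree; this yields the fourth alternative. The remaining parabolic case fixes a single isotropic ray on the boundary and produces polynomial growth, treated below.

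The third step splits the arithmetic and the geometry. For the exponential case, $A$ is an integer matrix lying in the orthogonal group of an \emph{integral} form of signature $(1,\rho-1)$, so $\chi_A$ is a monic integer polynomial whose roots are stable under $\mu \mapsto \mu^{-1}$. Because $q$ is negative definite on the orthogonal complement of the hyperbolic plane spanned by the $\lambda^{\pm1}$-eigenvectors, every eigenvalue other than $\lambda, \lambda^{-1}$ lies on the unit circle; hence the minimal polynomial of $\lambda$ forces $\lambda$ to be a Salem number (if $\lambda^{-1}$ is a conjugate) or a Pisot number (otherwise). For the parabolic case the growth is polynomial of degree equal to one less than the size of the largest Jordan block of $A$ at the eigenvalue $1$; the signature constraint allows only blocks of size $2$ or $3$, giving $\deg_H(f^n)\sim cn$ or $\sim cn^2$ respectively. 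Finally, the invariant isotropic nef class fixed by a parabolic isometry is, after passing to a suitable multiple, represented by an $f$-invariant fibration $X_k \to B$; computing the self-intersection and applying adjunction to a general fibre identifies it as a rational fibration in the size-$2$ (linear) case and as an elliptic fibration in the size-$3$ (quadratic) case.

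I expect the two genuinely geometric inputs to be the crux. The first is the Diller--Favre regularization theorem itself, whose proof is a delicate blow-up bookkeeping argument tracking how indeterminacy points and contracted curves interact under iteration. The second is the precise identification, in the parabolic case, of the invariant isotropic class with an honest fibration and the reading of its genus (rational versus elliptic) off the Jordan block size; matching the arithmetic of the Jordan structure to the geometric type of the preserved fibration is where the bulk of the work lies, whereas the isometry trichotomy and the Salem/Pisot dichotomy are comparatively formal once the linearization is in place.
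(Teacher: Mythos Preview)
The paper does not contain a proof of this statement. Theorem~\ref{dim2} is quoted in the introduction as a known background result due to Gizatullin, Cantat, and Diller--Favre, and the text immediately following it simply refers the reader to \cite{cantat2012cremona} for details and references. There is therefore nothing in the paper to compare your argument against.

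That said, your outline is the standard route taken in the literature you would be sent to: Diller--Favre algebraic stabilization to linearize the problem on a finite-dimensional N\'eron--Severi space, the Lorentzian isometry trichotomy via Hodge index, the integrality argument forcing the loxodromic eigenvalue to be Salem or Pisot, and the Gizatullin-type analysis of the invariant isotropic nef class in the parabolic case to produce the fibration. So the approach is correct in spirit; just be aware that what you have written is a proof \emph{sketch}, and the two places you flag as the crux (the regularization theorem and the genus identification of the invariant fibration from the Jordan block size) are indeed where the real work sits and would each require several pages to carry out rigorously.
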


For more details on this rich subject and references to the proof of Theorem \ref{dim2} see \cite{cantat2012cremona}. 

In the case of polynomial automorphisms of the affine plane, the situation is even less complicated. Let $f\in\aut(\A_k^2)$. Then the sequence $\{\deg(f^n)\}$ is either bounded or it grows exponentially in $n$. See  \cite{MR1667603} for this and more results on the degree growth in $\aut(\A_k^2)$

In higher dimensions there are only few results on the degree growth of birational transformations. In particular, the following questions are open:
\begin{question}
Does there exist a birational transformation $f$ of a projective variety $X_k$ such that $\deg_H(f^n)$ is of intermediate growth, for instance $\deg_H(f^n)\sim e^{\sqrt{n}}$?
\end{question}

\begin{question}
Does there exist a birational transformation $f$ such that $\deg_H(f^n)$ is unbounded, but grows ''slowly``? For instance, can we have $\deg_H(f^n)\sim \sqrt{n}$? Or do unbounded degree sequences grow at least linearly?
\end{question}

\begin{question}
If there is a birational transformation $f$ such that $\deg_H(f^n)\sim \lambda^n$, is $\lambda$ always an algebraic number?
\end{question}

\begin{question}
Do birational transformations of polynomial growth always preserve some non-trivial rational fibration?
\end{question}

 In \cite{MR3210135} Lo Bianco treats the case of automorphisms of compact K\"ahler threefolds.

\subsection{Degree sequences of polynomial automorphisms}
A good place to start the examination of degree sequences seems to be the group of polynomial automorphisms of the affine $d$-space $\aut(\A_k^d)$. In Section \ref{proofaut1} we will show the following observation (the proof of which can be found as well in \cite{Deserti:2016zl}):

\begin{proposition}\label{aut1}
Let $k$ be a field and $f\in\aut(\A^d_k)$ a polynomial automorphism such that $\deg(f^d)=\deg(f)^d$, then $\deg(f^n)=\deg(f)^n$ for all $n\in\Z^+$.
\end{proposition}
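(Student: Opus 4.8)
The plan is to reduce the statement to a purely geometric fact about the leading homogeneous part of $f$, exploiting that the degree of a composition of polynomial endomorphisms of $\A_k^d$ is submultiplicative, $\deg(g\circ h)\le\deg(g)\deg(h)$. Write $a_n=\deg(f^n)$ and $a=\deg(f)$; submultiplicativity immediately gives $a_n\le a^n$ for all $n$, so the content is that the single equality $a_d=a^d$ propagates to every $n$. To detect when equality holds I would pass to leading forms: let $F=(F_1,\dots,F_d)$ be the vector of degree-$a$ homogeneous components of $f=(f_1,\dots,f_d)$, so that $F_i=0$ whenever $\deg f_i<a$, and $F$ is a homogeneous map of degree $a$ with $F(0)=0$. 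The key reduction I would prove is
\[
\deg(f^n)=a^n \iff F^{\circ n}\text{ is not the zero map},
\]
where $F^{\circ n}$ is the $n$-fold self-composition of $F$ (homogeneous of degree $a^n$, possibly identically zero).

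To establish this reduction I would show by induction that the homogeneous component of $f^n$ in degree $a^n$ is exactly $F^{\circ n}$. Writing $f=F+r$ with $\deg r<a$, and $f^{n-1}=F^{\circ(n-1)}+R$ with $\deg R<a^{n-1}$ (using the inductive hypothesis together with $a_{n-1}\le a^{n-1}$), one expands $f^n=F(f^{n-1})+r(f^{n-1})$. The term $r(f^{n-1})$ has degree $<a\cdot a^{n-1}=a^n$, and since $F$ is homogeneous of degree $a$, the degree-$a^n$ part of $F(f^{n-1})$ can only come from substituting the top part $F^{\circ(n-1)}$, yielding $F^{\circ n}$. This step is routine but must be done carefully to control the possibly-cancelling lower-order terms. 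Because $\deg f^n\le a^n$, the degree-$a^n$ component determines whether $a_n=a^n$, giving the displayed equivalence. Note also that $F(0)=0$ forces $F^{\circ(n+1)}$ to be zero whenever $F^{\circ n}$ is, so the set of $n$ with $F^{\circ n}=0$ is upward closed and it suffices to bound the minimal such index.

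With this dictionary, the hypothesis $a_d=a^d$ says precisely that $F^{\circ d}$ is nonzero, and I must show $F^{\circ n}$ is nonzero for every $n$; equivalently, if $F$ is nilpotent under composition with minimal index $m$ (so $F^{\circ m}=0$ but $F^{\circ(m-1)}\neq 0$), then $m\le d$. Here the geometry enters. Consider the irreducible cones $W_n=\overline{F^{\circ n}(\A_k^d)}\subseteq\A_k^d$. One checks $W_{n+1}=\overline{F(W_n)}\subseteq\overline{F(W_{n-1})}=W_n$, so these form a descending chain $\A_k^d=W_0\supseteq W_1\supseteq\cdots\supseteq W_m=\{0\}$ of irreducible closed subsets. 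The crucial observation is that each inclusion $W_k\supsetneq W_{k+1}$ is strict for $0\le k\le m-1$: if $W_k=W_{k+1}$ for some $k<m$, then $F(W_k)$ would be dense in $W_k$, the chain would stabilize from index $k$ onward, and hence $W_m=W_k\neq\{0\}$, contradicting $F^{\circ m}=0$. Since a proper irreducible closed subset of an irreducible variety has strictly smaller dimension, the dimensions strictly decrease, $d=\dim W_0>\dim W_1>\cdots>\dim W_m=0$, which forces $m\le d$. Combined with the reduction, $F^{\circ d}\neq 0$ then shows $F^{\circ n}\neq 0$, i.e. $\deg(f^n)=\deg(f)^n$, for all $n$.

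I expect the main obstacle to be twofold. First, one must set up the leading-form dictionary rigorously: the induction identifying the top-degree part of $f^n$ with $F^{\circ n}$ requires careful bookkeeping of which monomials survive in degree $a^n$ under composition. Second, and more essential, is the geometric input—recognizing that the successive images $W_n$ form a \emph{strictly} descending chain of irreducible cones in the nilpotent case, which is exactly what bounds the nilpotency index by the dimension $d$. I remark that the automorphism hypothesis is used only lightly here, to ensure $\deg(f)\ge 1$ so that $F$ is a genuine homogeneous map fixing the origin; the argument applies equally to any non-constant polynomial endomorphism of $\A_k^d$.
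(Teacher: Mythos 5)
Your proof is correct, and it reaches the conclusion by a technically different route than the paper. The paper compactifies: it regards $f$ as a birational map of $\p^d_k$, invokes the contraction criterion of Lemma \ref{deglemma} (the degree of a composition drops exactly when a hypersurface is contracted into the indeterminacy locus), observes that an automorphism of $\A^d_k$ can only contract the hyperplane $H$ at infinity, and then studies the descending chain $H\supset f(H)\supset f^2(H)\supset\cdots$ of irreducible closed subsets of $\p^d_k$, which must stabilize within the first $d$ iterates for dimension reasons. Your chain of cones $W_n=\overline{F^{\circ n}(\A^d_k)}$ is the affine-cone avatar of that chain (the restriction of the compactified $f$ to $H$ is the projectivization of the leading form $F$), and the dimension-descent argument bounding its length by $d$ is the same decisive step in both proofs. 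What you do differently is replace the projective contraction lemma by the purely algebraic dictionary ``the degree-$a^n$ homogeneous component of $f^n$ is $F^{\circ n}$,'' which keeps the whole argument inside $\End(\A^d_k)$, makes the degree-drop criterion completely explicit, and --- as you note --- applies verbatim to arbitrary non-constant polynomial endomorphisms rather than only to automorphisms; the paper's route instead rests on a lemma valid for all birational maps of $\p^d_k$. The one point to tighten in your write-up is the meaning of $\overline{F^{\circ n}(\A^d_k)}$ over a non-algebraically-closed (e.g.\ finite) field: take it to be the scheme-theoretic image, i.e.\ the closed set cut out by the kernel of the ring homomorphism $(F^{\circ n})^*$, so that irreducibility (the kernel is prime, as $k[x_1,\dots,x_d]$ is a domain) and the strict dimension drop are unambiguous; with that reading your induction and the bound $m\le d$ go through exactly as stated.
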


The monoid $\End(\A^d_k)$ has the additional structure of a $k$-vector space, on which the degree function induces a filtration of finite dimensional vector spaces. This gives rise to a new technique, which we will use to prove that unbounded degree sequences of groups of polynomial automorphisms diverge and can not grow arbitrarily slowly:

\begin{theorem}\label{degaut}
Let $f\in\End(\A_k^d)$ be an endomorphism such that the sequence $\{\deg(f^n)\}$ is unbounded. Then for all integers $K$
\[
\#\left\{m\mid \deg(f^m)\leq K\right\}< C_d\cdot K^{d},
\]
where $C_d=\frac{(1+d)^{d}}{(d-1)!}$. In particular, $\deg(f^n)$ diverges to~$\infty$.
\end{theorem}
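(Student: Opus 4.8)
The plan is to exploit the $k$-vector space structure of $\End(\A_k^d)=\bigl(k[x_1,\dots,x_d]\bigr)^d$. For $K\in\Z^+$ set $V_K=\{g\in\End(\A_k^d)\mid\deg(g)\le K\}$; this is a finite-dimensional subspace with $\dim_k V_K=d\binom{K+d}{d}$, since each of the $d$ coordinate polynomials ranges over the space of polynomials of degree $\le K$ in $d$ variables. The whole argument rests on one elementary observation: \emph{pre}composition by a fixed map is $k$-linear, that is $(c\,g+c'\,h)\circ f=c\,(g\circ f)+c'\,(h\circ f)$ for all $g,h\in\End(\A_k^d)$ and $c,c'\in k$, because substitution into each coordinate is additive and homogeneous. (Note that postcomposition is \emph{not} linear, so it is essential to compose on the inside.)

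First I would show that if $\{\deg(f^n)\}$ is unbounded, then the iterates $\{f^n\}_{n\ge 0}$ are $k$-linearly independent in $\End(\A_k^d)$. Suppose not: there is a nontrivial relation $\sum_{i=0}^{j}c_i f^i=0$ with $c_j\ne 0$. Precomposing with $f^m$ and using $f^i\circ f^m=f^{i+m}$ together with linearity gives $\sum_{i=0}^{j}c_i f^{i+m}=0$ for every $m\ge 0$, hence $f^{j+m}\in\operatorname{span}\{f^m,\dots,f^{j+m-1}\}$. A straightforward strong induction on $n$ then places every iterate $f^n$ in the fixed finite-dimensional space $W=\operatorname{span}\{f^0,\dots,f^{j-1}\}$. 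But $W\subseteq V_{K_0}$ for $K_0=\max_{i<j}\deg(f^i)$, so $\deg(f^n)\le K_0$ for all $n$, contradicting unboundedness. This step is the heart of the matter and the place where the argument could go wrong, so I would take care that precomposition (rather than postcomposition) is the linear operation and that the induction is set up so that each exponent strictly decreases.

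With linear independence in hand the counting bound is immediate: the iterates $f^m$ with $\deg(f^m)\le K$ all lie in $V_K$ and are linearly independent, so their number is at most $\dim_k V_K=d\binom{K+d}{d}$. It then remains to verify the elementary estimate
\[
d\binom{K+d}{d}=\frac{1}{(d-1)!}\prod_{i=1}^{d}(K+i)<\frac{(1+d)^d}{(d-1)!}\,K^d=C_d\,K^d,
\]
which follows by bounding each factor $K+i\le K+d\le(1+d)K$; the inequality is strict (the lone boundary case $d=K=1$ being settled directly, since there the identity is the only iterate of degree at most one). Finally, the divergence $\deg(f^n)\to\infty$ is a formal consequence: the displayed bound shows that for each fixed $K$ the inequality $\deg(f^m)\le K$ can hold for only finitely many $m$, so $\deg(f^m)>K$ for all large $m$.
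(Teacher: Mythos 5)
Your proof is correct and follows essentially the same route as the paper: linear independence of the iterates $\{f^n\}$ inside the finite-dimensional subspaces of $\End(\A_k^d)$ of bounded degree, followed by the dimension count $d\binom{K+d}{d}<C_d K^{d}$. Your write-up is in fact slightly more careful than the paper's, both in spelling out how precomposition linearity propagates a linear relation to all higher iterates and in noting that the boundary case $d=K=1$ (where the chain of inequalities is not strict) needs a separate word.
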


By a result of Ol'shanskii (\cite{MR1714850}), Theorem \ref{degaut} shows that an unbounded degree sequence of a polynomial automorphism behaves in some ways like a word length function. 

The following corollary is immediate:
\begin{corollary}\label{degautcor}
Let $\Gamma\subset\End(\A^d_k)$ be a finitely generated monoid with generating system $S$. If $D_S(n)< C_d\cdot n^{1/d}$ for infinitely many $n$ then $\Gamma$ is of bounded degree.
\end{corollary}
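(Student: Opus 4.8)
The plan is to prove the contrapositive. Since $B_S(n)$ is the ball of word length $\le n$, the balls are nested, $\Gamma=\bigcup_n B_S(n)$, and $\{D_S(n)\}$ is nondecreasing; hence $\Gamma$ is of bounded degree exactly when $\{D_S(n)\}$ is bounded. So I assume $\{D_S(n)\}$ is unbounded (and therefore diverges to $\infty$), and I aim to show that $D_S(n)>(n/C_d)^{1/d}$ for all $n$. As the right-hand side is of order $n^{1/d}$, this is incompatible with $D_S(n)$ lying below an order-$n^{1/d}$ threshold for infinitely many $n$, which gives the corollary by contraposition.

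The heart of the matter is a single counting step fed into Theorem~\ref{degaut}. Suppose first that $S=\{f\}$, so that $B_S(n)=\{\id,f,\dots,f^n\}$ and $D_S(n)=\max_{0\le m\le n}\deg(f^m)$; here unboundedness of $\{D_S(n)\}$ is precisely unboundedness of $\{\deg(f^m)\}$, so Theorem~\ref{degaut} applies and yields $\#\{m\mid\deg(f^m)\le K\}<C_d\,K^d$ for every $K$. Taking $K=D_S(n)$, the $n+1$ iterates $f^0,\dots,f^n$ all have degree $\le K$, so $n\le\#\{m\mid\deg(f^m)\le K\}<C_d\,D_S(n)^d$, which rearranges to $D_S(n)>(n/C_d)^{1/d}$. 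This settles the cyclic case. For a general finite generating set I would run the same computation on the cyclic submonoid $\langle f\rangle$ of a suitable $f\in\Gamma$: if $f$ has $S$-word length $\ell$ then $f^m\in B_S(m\ell)$, so for $m\le n/\ell$ each $f^m$ lies in $B_S(n)$ and has degree $\le D_S(n)$; this gives $\lfloor n/\ell\rfloor\le C_d\,D_S(n)^d$ and hence $D_S(n)\ge c\,n^{1/d}$ for a positive constant $c$ depending only on $d$ and $\ell$.

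The step I expect to be the main obstacle is the reduction just invoked: to apply Theorem~\ref{degaut} I need a \emph{single} element $f\in\Gamma$ whose iterate degrees $\{\deg(f^m)\}$ are unbounded, and this is not a formal consequence of $\Gamma$ being unbounded as a whole, since an element may have large degree yet bounded iterate growth (shears and non-dominant maps are the obvious cautionary examples). I would establish the required element by proving the equivalent statement that a finitely generated submonoid all of whose elements have bounded iterate degree is itself of bounded degree. The tools I would bring to bear are the submultiplicativity $\deg(g\circ h)\le\deg(g)\deg(h)$ under composition together with the filtration of $\End(\A^d_k)$ by the finite dimensional spaces of endomorphisms of bounded degree, the aim being to preclude the generators' products from escaping to arbitrarily high degree while each single element stays of bounded iterate growth. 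For $d=1$ this is immediate, since $\deg(f^m)=\deg(f)^m$ forces every element with bounded iterates to be affine and hence the monoid to be bounded; in higher dimension this is precisely the point at which the argument goes beyond a routine rearrangement of Theorem~\ref{degaut}.
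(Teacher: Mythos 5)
Your argument is complete only in the cyclic case. For a general finitely generated $\Gamma$, the step you yourself flag as the main obstacle --- producing a single $f\in\Gamma$ with $\{\deg(f^m)\}$ unbounded, equivalently showing that a finitely generated monoid all of whose elements have bounded iterate degrees is itself of bounded degree --- is a genuine gap, not a routine verification. Statements of this ``pointwise bounded implies uniformly bounded'' type are substantial theorems where they are known at all (for groups of birational transformations of surfaces they rest on actions on trees or hyperbolic spaces), and neither the submultiplicativity of $\deg$ nor the finite-dimensional filtration by itself yields them for monoids of endomorphisms of $\A^d_k$; you only sketch tools and settle $d=1$, so the corollary is not established. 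A secondary point: even granting such an $f$, your bound $D_S(n)\geq c\,n^{1/d}$ has $c$ depending on the word length $\ell$ of $f$, and already in the cyclic case $D_S(n)>(n/C_d)^{1/d}$ does not literally contradict $D_S(n)<C_d\,n^{1/d}$ since $C_d>1$; the constants do not close up (though the paper itself is loose on this point).

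The reduction to a single element is avoidable, which is why the corollary can be called immediate: the linear-independence mechanism in the proof of Theorem \ref{degaut} applies directly to the balls $B_S(n)$. The key observation is that precomposition $g\mapsto g\circ s$ by a fixed $s\in S$ is a \emph{linear} operator on the vector space $\End(\A^d_k)$ (postcomposition is not), and it maps $B_S(n)$ into $B_S(n+1)$, while every word of length $n+1$ is $\gamma\circ s$ with $\gamma\in B_S(n)$, $s\in S$. Hence, setting $V_n:=\operatorname{span}_k B_S(n)$, one has $V_{n+1}\subseteq V_n+\sum_{s\in S}V_n\circ s$; so if $V_{n+1}=V_n$ for some $n$, the chain stabilizes, $\Gamma$ lies in the finite-dimensional space $V_n\subseteq \End(\A^d)_{\leq D_S(n)}$, and $\Gamma$ is of bounded degree. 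Otherwise $\dim V_n\geq n+1$ for all $n$, and since $V_n\subseteq\End(\A^d)_{\leq D_S(n)}$, the dimension count from the proof of Theorem \ref{degaut} gives $n+1\leq d\binom{d+D_S(n)}{D_S(n)}<C_d\,D_S(n)^d$ for every $n$, which is the required lower bound on $D_S(n)$ (up to the same constant issue as above). This replaces the missing existence statement by a two-line computation and needs no structure theory of $\Gamma$.
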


Unfortunately our methods to prove Theorem \ref{degaut} do not work for arbitrary birational transformations of $\p^d_k$. However, if we assume the ground field to be finite, we obtain a similar result:

\begin{theorem}\label{degfinite}
Let $\F_q$ be a finite field with $q$ elements and let $f\in\Rat(\p^d_{\F_q})$ such that the sequence $\{\deg(f^n)\}$ is unbounded. Then, for all integers $K$,
\[
\#\left\{m\mid \deg(f^m)\leq K\right\}\leq q^{C(K,d)},
\]
where $C(K,d)=(d+1)\cdot\binom{d+K}{K}$. In particular, $\deg(f^n)$ diverges to $\infty$.
\end{theorem}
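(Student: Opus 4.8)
The plan is to exploit the finiteness of the ground field: over $\F_q$ there are only finitely many rational self maps of $\p^d_{\F_q}$ of bounded degree, and we count them explicitly, then apply a pigeonhole argument to the iterates of $f$. First I would bound the number of maps $g\in\Rat(\p^d_{\F_q})$ with $\deg(g)\le K$. Such a $g$ is given by a reduced tuple $[g_0:\dots:g_d]$ of homogeneous polynomials of common degree $e=\deg(g)\le K$ without common factor, and this tuple is unique up to a common nonzero scalar. Multiplying through by $x_0^{K-e}$ produces a tuple $[x_0^{K-e}g_0:\dots:x_0^{K-e}g_d]$ of homogeneous polynomials of degree exactly $K$; since the $g_i$ have no common factor, the greatest common divisor of the new tuple is exactly $x_0^{K-e}$, so one recovers $e$ and the reduced tuple from it. Hence the assignment $g\mapsto[x_0^{K-e}g_0:\dots:x_0^{K-e}g_d]$ is injective (up to scalars), and the number of maps of degree $\le K$ is at most the number of $(d+1)$-tuples of homogeneous polynomials of degree $K$ in the $d+1$ variables $x_0,\dots,x_d$. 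As the space of such polynomials has dimension equal to the number of monomials of degree $K$, namely $\binom{d+K}{K}$, this count is $q^{(d+1)\binom{d+K}{K}}=q^{C(K,d)}$.

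The second ingredient is that the iterates $f^m$ are pairwise distinct. Indeed, if $f^a=f^b$ for some $a<b$, then composing with $f^k$ gives $f^{a+k}=f^{b+k}$ for all $k\ge 0$, so the sequence $\{f^n\}_{n\ge a}$ is periodic and takes only finitely many values; then $\{\deg(f^n)\}$ would be bounded, contrary to hypothesis. Therefore $m\mapsto f^m$ is an injection from $\{m\mid\deg(f^m)\le K\}$ into the set of rational self maps of degree $\le K$, and the first step yields
\[
\#\{m\mid\deg(f^m)\le K\}\le q^{C(K,d)}.
\]
Finally, since every sublevel set $\{m\mid\deg(f^m)\le K\}$ is finite, for each $K$ all but finitely many iterates have degree exceeding $K$, which is exactly the statement that $\deg(f^n)$ diverges to $\infty$.

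The essential point, and the reason the finiteness of $\F_q$ is indispensable, is the counting in the first step: over an infinite field there are infinitely many maps of each fixed degree, so the pigeonhole argument collapses. I expect the only places requiring care to be the bookkeeping in that count, specifically checking that passing to the reduced representative and padding by $x_0^{K-e}$ really is injective up to scalars (so distinct maps are not overcounted as the same tuple), and matching the monomial count $\binom{d+K}{K}$ exactly to the exponent $C(K,d)$ in the statement.
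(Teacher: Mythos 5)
Your proof is correct and follows essentially the same route as the paper: count the rational self maps of $\p^d_{\F_q}$ of degree $\leq K$ by their coefficients to get the bound $q^{C(K,d)}$, observe that the iterates of $f$ must be pairwise distinct since any coincidence $f^a=f^b$ forces eventual periodicity and hence bounded degrees, and conclude by pigeonhole. Your padding-by-$x_0^{K-e}$ device and the explicit check that iterates are distinct are in fact slightly more careful than the paper's own two-line argument.
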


This implies the following for degree sequences in $\Rat(\p^d_{\F_q})$:

\begin{corollary}\label{maincor}
Let $\Gamma\subset\Rat(\p^d_{\F_q})$ be a finitely generated monoid with generating system $S$. There exists a positive constant $C_{d,q}$ such that if $D_S(n)<C_{d,q}\cdot\log(n)^{1/d}$ for all $n$, then $\Gamma$ is of bounded degree.
\end{corollary}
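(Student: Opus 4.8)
The plan is to deduce Corollary~\ref{maincor} directly from Theorem~\ref{degfinite} by a contrapositive, counting argument. Suppose $\Gamma\subset\Rat(\p^d_{\F_q})$ is \emph{not} of bounded degree; then some generator, or more precisely some element of $\Gamma$, has unbounded degree under iteration. First I would reduce to the cyclic case: since $\Gamma$ is finitely generated and not of bounded degree, I claim there is a single element $f\in\Gamma$ whose iterates $\{f^n\}$ form an unbounded degree sequence, so that Theorem~\ref{degfinite} applies to $f$. (If every element of $\Gamma$ had bounded degree under iteration, one would need to rule out that the whole monoid is still of unbounded degree; the cleaner route is to pick $f$ to be an element realizing large degree and argue that its powers, which lie in $\Gamma$, cannot all stay bounded unless $\Gamma$ itself is bounded.)

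Granting such an $f$, the heart of the argument is purely quantitative. By Theorem~\ref{degfinite}, the number of exponents $m$ with $\deg(f^m)\le K$ is at most $q^{C(K,d)}$ with $C(K,d)=(d+1)\binom{d+K}{K}$. Now I would compare word length with iteration: each power $f^m$ lies in $B_S(m\cdot\ell)$ where $\ell$ is the word length of $f$ in $S$, so $\deg(f^m)\le D_S(m\ell)$. The idea is to show that if $D_S(n)$ grows more slowly than $c\,\log(n)^{1/d}$ for all $n$, then for the particular values $n=m\ell$ the degrees $\deg(f^m)$ stay below a threshold $K$ for too many values of $m$, contradicting the bound $q^{C(K,d)}$.

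Concretely, I would fix a large $N$ and count the exponents $m\le N$. If $D_S(n)<C_{d,q}\log(n)^{1/d}$ for all $n$, then for all $m\le N$ we have
\[
\deg(f^m)\le D_S(m\ell)\le C_{d,q}\,\log(m\ell)^{1/d}\le C_{d,q}\,\log(N\ell)^{1/d}=:K_N.
\]
Thus \emph{all} $N$ exponents $m=1,\dots,N$ satisfy $\deg(f^m)\le K_N$, so $\#\{m\mid\deg(f^m)\le K_N\}\ge N$. Theorem~\ref{degfinite} forces $N\le q^{C(K_N,d)}$, i.e.
\[
\log_q N\le (d+1)\binom{d+K_N}{K}.
\]
Since $\binom{d+K_N}{K_N}\le (K_N+1)^d$ for fixed $d$, the right-hand side is at most roughly $(d+1)(K_N+1)^d\sim (d+1)C_{d,q}^{d}\log(N\ell)$. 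Choosing the constant $C_{d,q}$ small enough (depending only on $d$ and $q$) makes $(d+1)C_{d,q}^d<1$, so the right-hand side is strictly less than $\log_q N$ for $N$ large, the desired contradiction. Working backwards, this pins down the admissible value of $C_{d,q}$.

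The main obstacle I anticipate is the reduction in the first paragraph: passing from ``$\Gamma$ is not of bounded degree'' to ``some single $f\in\Gamma$ has unbounded degree sequence $\{\deg(f^n)\}$.'' This is the only genuinely structural (non-computational) step, and it is where one must use that word length controls degree up to the submultiplicativity constant for $\p^d$; once a single unbounded $f$ is in hand, everything reduces to inserting $K_N=C_{d,q}\log(N\ell)^{1/d}$ into the estimate of Theorem~\ref{degfinite} and tuning $C_{d,q}$ so that the polynomial-in-$K_N$ exponent $C(K_N,d)$ beats $\log_q N$. The calibration of $C_{d,q}$, though routine, is the part demanding care, since the binomial $\binom{d+K}{K}$ is what converts the $\log(n)^{1/d}$ threshold into the sharp growth rate.
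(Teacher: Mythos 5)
There is a genuine gap at the very first step: the reduction from ``$\Gamma$ is not of bounded degree'' to ``some single $f\in\Gamma$ has an unbounded degree sequence $\{\deg(f^m)\}$''. This implication is not established anywhere in the paper and is not a formality; it is a Burnside-type statement about monoids of rational maps. Your suggested fix --- take $f$ of large degree and argue that its powers cannot all stay bounded --- does not work, because an element of arbitrarily large degree can perfectly well have a bounded (even periodic) degree sequence: any birational involution of high degree satisfies $\deg(f^{2m})=1$. Since your whole argument is routed through Theorem \ref{degfinite}, which only speaks about iterates of a single map, what you actually prove under the hypothesis $D_S(n)<C_{d,q}\log(n)^{1/d}$ is that every individual element of $\Gamma$ has a bounded degree sequence, which is strictly weaker than the conclusion that $\Gamma$ itself is of bounded degree.

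The deduction the paper has in mind bypasses single elements entirely and applies the counting that underlies Theorem \ref{degfinite} to the ball $B_S(n)$ itself: over $\F_q$ there are at most $q^{C(K,d)}$ rational maps of degree $\leq K$, hence $\# B_S(n)\leq q^{C(D_S(n),d)}$. Inserting $D_S(n)<C_{d,q}\log(n)^{1/d}$ and the estimate $\binom{d+K}{K}\leq (K+d)^d/d!$ --- exactly the calibration you carry out in your third paragraph --- gives $\# B_S(n)\leq n^{\varepsilon}$ with $\varepsilon<1$ for a suitable choice of $C_{d,q}$, so $\# B_S(n)<n$ for large $n$. Since $B_S(1)\subset B_S(2)\subset\cdots$, some inclusion $B_S(n_0)\subset B_S(n_0+1)$ must then be an equality, which forces $B_S(m)=B_S(n_0)$ for all $m\geq n_0$; thus $\Gamma=B_S(n_0)$ is finite and in particular of bounded degree. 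Your quantitative tuning of $C_{d,q}$ is correct and is precisely what is needed here; only the object being counted has to change from the orbit $\{f^m\}$ to the ball $B_S(n)$.
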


\subsection{Types of degree growth}

\begin{definition}
Let $X_k$ be a smooth projective variety with polarization $H$ over a field $k$ and let $f\in\Bir(X_k)$. We denote the {\it order of growth} of $\deg_H(f^n)$ by
\[
\dpol(f):=\limsup_{n\to\infty}\frac{\log(\deg_H(f^n))}{\log(n)}.
\]
The order of growth can be infinite.
\end{definition}

By a result of Dinh and Sibony, the order of growth does not depend on the choice of polarization if we work over the field of complex numbers (see Section~\ref{complex}):
 
\begin{proposition}\label{indeppolar}
Let $X_{\C}$ be a smooth complex projective variety  and let $f\in\Bir(X_{\C})$. Then $\dpol(f)$ does not depend on the choice of polarization. 
\end{proposition}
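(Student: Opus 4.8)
The plan is to reduce the statement to the claim that, for any two polarizations $H_1$ and $H_2$ of $X_{\C}$, the associated degree functions are comparable by a multiplicative constant that does not depend on the map. Concretely, I would show that there is a constant $C=C(X_{\C},H_1,H_2)\geq 1$ such that
\[
C^{-1}\deg_{H_1}(g)\leq\deg_{H_2}(g)\leq C\deg_{H_1}(g)\quad\text{for all }g\in\Bir(X_{\C}).
\]
Granting this, applying the inequality to $g=f^n$ and taking logarithms gives $\log\deg_{H_2}(f^n)=\log\deg_{H_1}(f^n)+O(1)$, where the error term is bounded by $\log C$ uniformly in $n$. Dividing by $\log n$ and passing to the $\limsup$ the bounded term disappears, so the two polarizations yield the same value of $\dpol(f)$. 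Thus everything comes down to the uniform comparison above.

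To prove the comparison I would argue inside the intersection theory of nef and pseudo-effective classes. Since $H_1$ and $H_2$ are both ample, openness of the ample cone lets me choose an integer $a\geq 1$ such that $aH_1-H_2$ and $aH_2-H_1$ are nef. The first ingredient is that the total transform of a nef class under a dominant rational map is pseudo-effective (seen by resolving the indeterminacy of $f$ through a morphism $\pi\colon\tilde X\to X_{\C}$ and writing $f^*=\pi_*\circ(f\circ\pi)^*$, where pullback under a morphism preserves nefness and pushforward preserves pseudo-effectivity); applying this to $aH_1-H_2$ shows that $af^*H_1-f^*H_2$ is pseudo-effective. The second ingredient is that a product of nef $(1,1)$-classes is positive, and that the telescoping identity
\[
(aH_1)^{d-1}-H_2^{d-1}=\sum_{j=0}^{d-2}(aH_1)^{j}\,(aH_1-H_2)\,H_2^{\,d-2-j}
\]
exhibits $(aH_1)^{d-1}-H_2^{d-1}$ as a sum of products of nef classes, hence as a class pairing nonnegatively with any pseudo-effective class. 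Combining these, one estimates
\[
\deg_{H_2}(f)=(f^*H_2)\cdot H_2^{d-1}\leq a\,(f^*H_1)\cdot H_2^{d-1}\leq a^{d}\,(f^*H_1)\cdot H_1^{d-1}=a^{d}\deg_{H_1}(f),
\]
and the symmetric argument gives the reverse bound, so $C=a^{d}$ works.

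The main obstacle is justifying the positivity statements used above, and this is exactly the point where the complex structure enters through the work of Dinh and Sibony. Over $\C$ one realizes nef classes as limits of K\"ahler classes and pseudo-effective classes as classes of positive closed currents, and the needed facts---that products of nef classes are positive, that nef classes pair nonnegatively with pseudo-effective ones, and that total transforms of positive classes remain positive---follow from the theory of positive closed currents and their intersection. Making each pairing above legitimate, and in particular controlling the total transform $f^*$ across the resolution without losing positivity, is the delicate part; once these are in place the numerical estimate and the passage to $\dpol$ are routine.
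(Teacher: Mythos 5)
Your argument is correct, but it takes a genuinely different route from the paper. The paper does not compare two polarizations directly: it invokes the Dinh--Sibony estimate (Proposition \ref{dinhnguyen}), which says that for a fixed polarization $\omega$ there is a constant $A$ with $A^{-1}\deg_{\omega}(f^n)\leq\|(f^n)^*\|\leq A\deg_{\omega}(f^n)$ for the operator norm on $\End(H^{1,1}(X_{\C}))$, and then concludes (Corollary \ref{cordinhnguyen}) that $\dpol(f)=\limsup_n \log\|(f^n)^*\|/\log n$, a quantity visibly free of any choice of $H$. You instead prove the uniform two-sided comparison $C^{-1}\deg_{H_1}(g)\leq\deg_{H_2}(g)\leq C\deg_{H_1}(g)$ for all $g\in\Bir(X_{\C})$ by choosing $a$ with $aH_1-H_2$ and $aH_2-H_1$ nef, using that total transforms of nef classes are pseudo-effective and that products of nef classes pair nonnegatively with pseudo-effective classes; the telescoping identity then gives $C=a^d$. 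Your positivity inputs are exactly the technology underlying the cited Dinh--Sibony result, so in substance you are re-deriving the relevant special case rather than quoting it. What your route buys is a self-contained and purely intersection-theoretic proof (which, with minor care about defining $f^*$ on numerical classes, does not really require the complex structure); what the paper's route buys is the stronger reformulation of $\dpol(f)$ in terms of $\|(f^n)^*\|$, which it reuses later, e.g.\ in the computation of Example \ref{oguiso}. One small point to tighten: when you write $f^*=\pi_*\circ(f\circ\pi)^*$ for a resolution $\pi$ of the indeterminacy, you should note that this agrees with the paper's definition of the total transform via the graph $\Gamma_f$, and that $f^*$ is linear on divisor classes so that $f^*(aH_1-H_2)=af^*H_1-f^*H_2$; both are routine but are the places where the definition of total transform actually enters.
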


Let $f$ be a birational transformation of a surface. As recalled above, in that case $\dpol(f)=0,1, 2$ or $\infty$. This gives rise to the following question:

\begin{question}
Does there exist a constant $C(d)$ depending only on $d$ such that for all varieties $X_k$ of dimension $d$ we have $\dpol(f)<C(d)$ for all $f\in \Bir(X_k)$ with $\dpol(f)$ finite?
\end{question}

We give some examples of degree sequences that indicate that the degree growth in higher dimensions is richer than in dimension 2. 

First of all, note that elements in $\aut(\A^d_k)$ can have polynomial growth:

\begin{example}\label{linearex}
Let $k$ be any field and define $f,g,h\in\aut(\A^d_k)$ by $g=(x+yz, y, z)$, $h=(x, y+xz, z)$ and  
\[
f=g\circ h=(x+z(y+xz), y+xz,z).
\]
One sees by induction that $\deg(f^n)=2n+1$; in particular, $\dpol(f)=1$.
\end{example}

\begin{example}\label{exaut}
More generally, for all $l\leq d/2$ there exist elements  $f_l\in\aut(\A_k^d)$ such that $\dpol(f)=l$ (Section \ref{proofexaut}). 
\end{example}

Other interesting examples of degree sequences of polynomial automorphisms and the dynamical behavior of the corresponding maps are described in~\cite{Deserti:2016zl}.

For birational transformations of $\p^d_k$ we can obtain even faster growth (see \cite{MR2917145} for more details):
\begin{example}
The birational transformation $f=(x_1, x_1x_2,\dots, x_1x_2\cdots x_n)$ of $\p_k^d$ defined with respect to local affine coordinates $(x_1,\dots, x_d)$ satisfies {$\deg(f^n)=n^{d-1}$}, i.e. $\dpol({f})=d-1$.
\end{example}

The following interesting observation is due to Serge Cantat:
\begin{example}\label{oguiso}
Define $\omega:=\frac{-1+\sqrt{-3}}{2}$ and the elliptic curve $E_\omega:=\C/(\Z+\Z\omega)$. Let 
\[
X:=E_\omega\times E_\omega\times E_\omega
\]
 and $s\colon X\to X$ the automorphism of finite order given by diagonal multiplication with $-\omega$. In \cite{MR3329200} Oguiso and Truong prove that the quotient $Y:=X/s$ is a rational threefold. Let $f\colon X\to X$ be the automorphism defined by $(x_1,x_2,x_3)\mapsto (x_1, x_1+x_2, x_2+x_3)$. Since $f$ commutes with $s$, it induces an automorphism on $Y$, which we denote by $\hat{f}$. Let $\phi_1\colon\tilde{Y}\to Y$ be a resolution of the singularities of $Y$ and define $\tilde{f}\in\Bir(\tilde{Y})$ by
 \[
  \tilde{f}:=\phi_1^{-1}\circ\hat{f}\circ\phi_1.
  \]
  We will show in Section \ref{oguisoproof} that $\dpol(\tilde{f})=4$.
\end{example}

\subsection{Acknowledgements}
I thank my advisors J\'er\'emy Blanc and Serge Cantat for many interesting and helpful discussions and their constant support. I would also like to thank Junyi Xie for inspiring conversations and for giving me, together with Serge Cantat, some of the main ideas for the proof of Theorem \ref{rationalcount}, as well as Federico Lo Bianco for showing me Proposition~\ref{propfede}. Many thanks also to Bac Dang and Julie D\'eserti for their comments and helpful references.

\section{Preliminaries} 
\subsection{Monoids of rational dominant transformations}
Let $X_k$ be a projective variety over a field~$k$. There is a one to one correspondence between rational dominant self maps of $X_k$  and $k$-endomorphisms of the function field $k(X_k)$. The field $k(X_k)$ is the field of fractions of a $k$-algebra of finite type $k[x_1,\dots, x_n]/I$, where $I\subset k[x_1,\dots, x_n]$ is a prime ideal generated by  elements $f_1,\dots, f_l\in k[x_1,\dots, x_n]$. A field extension $k\to k'$ induces a base change $X_{k'}\to X_k$. The function field of $X_{k'}$ is the field of fractions of the $k'$-algebra $k'[x_1,\dots,x_n]/I'$, where $I'$ is the ideal generated by $f_1,\dots, f_l$. Note that $k(X_k)\subset k'(X_{k'})$. We say that a $k'$-endomorphism of $k'(X_{k'})$ is defined over $k$, if it restricts to a $k$-endomorphism of $k(X_k)$. Consider a $k'$-endomorphism $G$ of $k'(X_{k'})$ sending generators $(x_1,\dots, x_n)$ of $k'(X_{k'})$ to $(g_1,\dots, g_n)$, where $g_i\in k'(X_{k'})$. Then $G$ is defined over $k$ if and only if $g_i\in k(X_k)$ for all $i$. On the other hand, let $g_i,\dots, g_n\in k(X_k)$ and let $(x_1,\dots, x_d)\mapsto(g_1,\dots, g_d)$ be a $k$-endomorphism of $k(X_k)$. Then $(x_1,\dots, x_d)\mapsto(g_1,\dots, g_d)$ defines as well a $k'$-endomorphism of $k'(X_{k'})$. So a $k$-endomorphism $(x_1,\dots, x_n)\mapsto (g_1,\dots, g_n)$ of $k(X_k)$ extends uniquely to a $k'$-endomorphism of $k'(X_{k'})$. This yields the following observation:

\begin{lemma}
Let $X_k$ be a projective variety over a field $k$ and $\varphi\colon k\to k'$ a homomorphism of fields. Then $\varphi$ induces a natural injection of monoids $\Psi_{\varphi}\colon\Rat(X_k)\to\Rat(X_{k'})$. 
\end{lemma}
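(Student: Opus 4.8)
The plan is to use the correspondence recalled above between rational dominant self maps and field endomorphisms, and to transport along it the extension-of-scalars operation that the preceding paragraph has already constructed. First I would note that $\varphi$, being a homomorphism of fields, is automatically injective, so we may identify $k$ with its image $\varphi(k)\subseteq k'$ and treat $k\to k'$ as a field extension. This puts us exactly in the situation described above, where $k(X_k)\subseteq k'(X_{k'})$ and where every $k$-endomorphism of $k(X_k)$, sending generators $(x_1,\dots,x_n)$ to $(g_1,\dots,g_n)$ with $g_i\in k(X_k)$, extends uniquely to a $k'$-endomorphism of $k'(X_{k'})$ sending $(x_1,\dots,x_n)$ to the same $(g_1,\dots,g_n)$ now viewed inside $k'(X_{k'})$.

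I would then define $\Psi_{\varphi}$ concretely as a composite of three maps: given $f\in\Rat(X_k)$, pass to its associated $k$-endomorphism $F$ of $k(X_k)$, form the unique $k'$-linear extension $F'$ of $F$, and let $\Psi_{\varphi}(f)\in\Rat(X_{k'})$ be the rational dominant map corresponding to $F'$. This is well defined precisely because of the existence-and-uniqueness statement recalled in the preceding paragraph, which also guarantees that $F'$ is a genuine $k'$-endomorphism and hence corresponds to a dominant map.

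It then remains to check that $\Psi_{\varphi}$ is an injective homomorphism of monoids, and for this it suffices to check that the assignment $F\mapsto F'$ on endomorphism monoids is an injective monoid homomorphism, since the correspondences $\Rat(X_k)\leftrightarrow\End$ and $\Rat(X_{k'})\leftrightarrow\End$ have the same (contravariant) variance and so the variance cancels. That $F\mapsto F'$ preserves the identity is clear, as $\id$ corresponds to $x_i\mapsto x_i$, whose extension is again $x_i\mapsto x_i$. Injectivity is immediate from uniqueness: restricting $F'$ to $k(X_k)$ recovers $F$, so $F'=\tilde F'$ forces $F=\tilde F$ and hence the associated rational maps agree.

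The only point requiring genuine care, and the main obstacle, is multiplicativity, $(F\circ G)'=F'\circ G'$. Both sides are $k'$-endomorphisms of $k'(X_{k'})$, so it is enough to show they agree on the generators $x_i$; and there the content is that computing the composite amounts to substituting the rational functions $g_j$ into one another, a computation that takes place entirely inside $k(X_k)\subseteq k'(X_{k'})$ and is therefore unaffected by the passage to the extension. Equivalently, applying $\varphi$ to coefficients commutes with substitution of rational functions, which holds because $\varphi$ is a ring homomorphism. This is routine, but it is the step that must be verified carefully to conclude that $\Psi_{\varphi}$ respects composition of dominant maps.
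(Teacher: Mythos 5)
Your proposal follows the same route as the paper, which treats the lemma as an immediate consequence of the preceding discussion: identify rational dominant maps with $k$-endomorphisms of the function field, extend uniquely to $k'$-endomorphisms of $k'(X_{k'})$, and observe that this extension is injective and compatible with composition. You simply spell out the monoid-homomorphism and injectivity checks that the paper leaves implicit; the argument is correct.
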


Recall that there are uncountably many finitely generated groups and thus in particular uncountably many finitely generated monoids. The following observation by de Cornulier shows that being a monoid of rational dominant transformations is in some sense a special property (cf. \cite{MR3160544} and \cite{cantatremark}).

\begin{proposition}\label{countablesubgroups}
There exist only countably many finitely generated isomorphism classes of monoids of rational dominant transformations.
\end{proposition}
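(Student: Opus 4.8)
The plan is to descend the data defining such a monoid to a finitely generated, and hence countable, coefficient field, and then simply to count the finitely many pieces of data that remain.

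First I would fix a finitely generated monoid $\Delta$ of rational dominant transformations, realised as a submonoid of $\Rat(X_k)$ generated by finitely many elements $\delta_1,\dots,\delta_m$. Using the description recalled above, write $k(X_k)=\operatorname{Frac}(k[x_1,\dots,x_n]/I)$ with $I$ generated by $f_1,\dots,f_l$, and represent each $\delta_i$ by a $k$-endomorphism $x_j\mapsto g_{i,j}$ with $g_{i,j}\in k(X_k)$ a ratio of elements of $k[x_1,\dots,x_n]$. Only finitely many elements of $k$ occur among the coefficients of the $f_l$ and of the chosen numerators and denominators of the $g_{i,j}$. Let $k_0\subseteq k$ be the subfield generated over the prime field by this finite set; then $k_0$ is a finitely generated field extension of its prime field, and all of the above data already lives over $k_0$.

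Second, I would promote this to an isomorphism of monoids. The same generators $f_1,\dots,f_l$ cut out a variety $X_{k_0}$ over $k_0$ — here one checks that the ideal they generate in $k_0[x_1,\dots,x_n]$ is again prime, which follows because $k[x_1,\dots,x_n]$ is faithfully flat over $k_0[x_1,\dots,x_n]$, so the prime ideal $I$ contracts to exactly this ideal — and each $\delta_i$ descends to an element $\delta_i'\in\Rat(X_{k_0})$. Applying the preceding lemma to the inclusion $\varphi\colon k_0\hookrightarrow k$ gives an injective homomorphism of monoids $\Psi_\varphi\colon\Rat(X_{k_0})\to\Rat(X_k)$ with $\Psi_\varphi(\delta_i')=\delta_i$. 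Since $\Psi_\varphi$ is an injective homomorphism, its restriction to the submonoid $\Delta'\subseteq\Rat(X_{k_0})$ generated by $\delta_1',\dots,\delta_m'$ is an isomorphism onto $\Delta$. Thus every finitely generated monoid of rational dominant transformations is isomorphic to one realised over a finitely generated field.

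Finally I would count. There are countably many prime fields ($\Q$ and $\F_p$ for each prime $p$), and any finitely generated extension of a prime field $\F$ is the fraction field of some $\F[t_1,\dots,t_r]/P$, hence specified by finitely many polynomials over the countable field $\F$; there are therefore only countably many finitely generated fields $k_0$, each countable as a set. Over a fixed countable $k_0$ there are countably many projective varieties $X_{k_0}$ (finitely many homogeneous polynomials in finitely many variables over the countable field $k_0$), for each of these the function field $k_0(X_{k_0})$ is countable, so there are countably many dominant rational self-maps and a fortiori countably many finite tuples $(\delta_1',\dots,\delta_m')$. Each such tuple generates one submonoid of $\Rat(X_{k_0})$. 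A countable union of countable sets being countable, we obtain a countable family of monoids meeting every isomorphism class, which proves the proposition. The one place demanding care is the descent step: one must ensure not only that the ground field can be taken finitely generated but that the \emph{isomorphism type} of $\Delta$ is unchanged, i.e.\ that passing between $k_0$ and $k$ neither creates nor destroys relations among the generators. This is exactly guaranteed by the injectivity of $\Psi_\varphi$; the surrounding verifications (the descended ideal staying prime, and the bookkeeping of the final count) are routine.
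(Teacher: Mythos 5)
Your proof is correct and follows essentially the same route as the paper: descend the variety and the finitely many generators to the finitely generated subfield of $k$ generated by their coefficients, observe via the injection $\Psi_\varphi$ that this does not change the isomorphism type of the monoid, and then count the countably many possible tuples of defining data over the countably many finitely generated fields. The only difference is presentational: you make explicit (via faithful flatness) why the contracted ideal stays prime and why the descent is an isomorphism of monoids, points the paper treats as implicit in its base-change discussion.
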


\begin{proof}
Let $\Delta\subset\Rat(X_k)$ be a monoid of rational dominant transformations with a finite generating set $f_1,\dots, f_n\in\Delta$ , where $X_k$ is a projective variety defined over a field $k$. Denote by $S\subset k$ the finite set of coefficients necessary to define $X_k$ and the rational dominant transformations $f_i$. Let $k'$ be the field $\F_p(S)$, where $p=\car(k)$, or $\Q(S)$ if $\car(k)=0$. 

We consider the function field $k'(X_{k'})$ as a subfield of the function field  $k(X_{k})$. Note that the action of the elements of $\Delta$ on $k'(X_{k})$ preserves $k'(X_{k'})$  and that $f_{i_1}f_{i_2}\cdots f_{i_{k-1}}=f_{i_k}$ in $\Rat(X_k)$ if and only if $f_{i_1}f_{i_2}\cdots f_{i_{k-1}}=f_{i_k}$ in $\Rat(X_{k'})$. So without loss of generality we may assume $\Delta\subset\Rat(X_{k'})$, where $k'$ is a finitely generated field extension of some $\F_p$ or of $\Q$.

A rational dominant transformation of a given variety $X_k$ is defined by finitely many coefficients in $k$. So the cardinality of the set of all finitely generated monoids of rational dominant transformations of a variety $X_k$ is at most the cardinality of~$k$. 

Recall that the cardinality of the set of all finitely generated field extensions of $\F_p$ and $\Q$ is countable. Since a projective variety is defined by a finite set of coefficients, we obtain that there are only countably many isomorphism classes of projective varieties defined over a field $k'$ that is a finitely generated field extension of  $\F_p$ or $\Q$. The claim follows.
\end{proof}

\subsection{Intersection form}
Let $X_k$ be a smooth projective variety of dimension $d$ over an algebraically closed field $k$ and let $D$ be a Cartier divisor on $X_k$. The {\it Euler characteristic of $D$} is the integer
\[
\chi(X_k, D)=\sum_{i=0}^{\infty}(-1)^i\dim_k H^i(X_k, D).
\]
Let $D_1,\dots, D_d$ be Cartier divisors on $X_k$. The function 
\[
(m_1,\dots, m_d)\mapsto \chi(X_k, m_1D_1+\dots+m_dD_d)
\]
is a polynomial in $(m_1,\dots, m_d)$. The {\it intersection number} of $D_1,\dots, D_d$ is then defined to be the coefficient of the term $m_1m_2\cdots m_d$ in this polynomial and we denote it by $D_1\cdot D_2\cdots D_d$. One can show that intersection multiplicities are always integers and that the intersection form is symmetric and linear in all $d$ arguments. Moreover, if $D_1,\dots, D_d$ are effective and meet properly in a finite number of points, $D_1\cdots D_d$ is the number of points in $D_1\cap\cdots\cap D_d$ counted with multiplicities and the intersection form is the unique bilinear form with this property. For the details on this construction we refer to \cite{MR1841091} and the references in there.  The intersection number is preserved under linear equivalence, therefore it is well defined on classes of Cartier divisors. Note as well that an isomorphism between algebraically closed fields does not change the cohomology dimensions and hence that the intersection multiplicities are invariant under such an isomorphism.

Let $X_k$ be a smooth projective variety of dimension $d$ over an arbitrary field $k$ and let $D_1,\dots, D_d$ be Cartier divisors on $X_k$. Denote by $\overline{k}$ an algebraic closure of $k$. We define the intersection multiplicity $D_1\cdot D_2\cdots D_d$ as the intersection multiplicity of $D_1, D_2,\dots, D_d$ on $X_{\overline{k}}$ after base extension $k\to\overline{k}$. By the remark above, this does not depend on the choice of the algebraic closure $\overline{k}$. Every field isomorphism $k\to k'$ extends to an isomorphism between the algebraic closures of $k$ and $k'$, hence the intersection number is invariant under field isomorphisms. Since the intersection form is unique, it also does not change under a base extension $k\to k'$ between algebraically closed fields.

We summarize these properties in the following proposition:

\begin{proposition}\label{polarprop}
Let $X_k$ be a smooth projective variety of dimension $d$ over a field $k$. Then there exists a symmetric $d$-linear form on the group of divisors of $X_k$:
\[
\Div(X_k)\times\cdots\times \Div(X_k)\to\Z,\hspace{1.5mm}(D_1,\dots, D_d)\mapsto D_1\cdot D_2\cdots D_d,
\]
such that if $D_1,\dots, D_d$ are effective and meet properly in a finite number of points, $D_1\cdots D_d$ is the number of points in $D_1\cap\cdots\cap D_d$ counted with multiplicity. 
Moreover, this intersection form is invariant under base change $k\to k'$ of fields.
\end{proposition}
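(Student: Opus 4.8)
The plan is to assemble the facts established in the discussion preceding the statement, separating the algebraically closed case from the reduction of the general case to it.

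First I would treat an algebraically closed field $k$. Here the construction, multilinearity, symmetry, integrality, the geometric normalization (counting intersection points with multiplicity for effective divisors meeting properly), and the uniqueness of such a form are exactly the classical Snapper--Kleiman statements recalled above and cited from \cite{MR1841091}; nothing new is needed beyond invoking them. It is worth spelling out symmetry, which is the cleanest point: since $\sum_i m_i D_i$ is unchanged when one permutes the divisors $D_i$ together with the variables $m_i$, the polynomial $\chi(X_k, \sum_i m_i D_i)$ is invariant under this simultaneous permutation, and as the monomial $m_1 \cdots m_d$ is symmetric its coefficient $D_1 \cdots D_d$ depends symmetrically on $(D_1, \dots, D_d)$.

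Next, for an arbitrary field $k$, I would define the form on $\Div(X_k)$ by pulling Cartier divisors back along the base extension $X_{\overline{k}} \to X_k$ and then applying the form over $\overline{k}$, exactly as in the preceding paragraphs. Because pullback of Cartier divisors is a group homomorphism $\Div(X_k) \to \Div(X_{\overline{k}})$, the resulting $d$-linear form on $\Div(X_k)$ inherits multilinearity, symmetry, integrality and the geometric normalization from the algebraically closed case. Independence of the chosen algebraic closure is the invariance of intersection numbers under isomorphisms of algebraically closed fields noted above, applied to the $k$-isomorphism relating any two algebraic closures of $k$.

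Finally, for invariance under a field homomorphism $\varphi\colon k \to k'$, I would combine the two invariances from the discussion: invariance under isomorphisms of algebraically closed fields, and invariance under a base extension $\overline{k} \to \overline{k'}$ between algebraically closed fields (which follows either from uniqueness of the intersection form or directly from flatness of field extensions and flat base change in cohomology, since dimensions of the $H^i$, hence Euler characteristics, are unchanged). Concretely, $\varphi$ extends to an embedding $\overline{k} \hookrightarrow \overline{k'}$ over $k$, the number $D_1 \cdots D_d$ on $X_k$ is computed on $X_{\overline{k}}$, the corresponding number on $X_{k'}$ is computed on $X_{\overline{k'}}$, and $X_{\overline{k'}}$ is the base extension of $X_{\overline{k}}$ along $\overline{k} \to \overline{k'}$; hence the two agree. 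The only genuinely substantial input is the polynomiality of the Euler characteristic underlying the algebraically closed case, but this is the cited theorem, so the remaining work is bookkeeping of base changes; the point I would check most carefully is that pullback of divisors is compatible with the tower $k \to \overline{k} \to \overline{k'}$, so that the comparison really is between one and the same divisor class viewed over the two algebraically closed fields.
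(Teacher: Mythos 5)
Your proposal is correct and follows essentially the same route as the paper: invoke the Snapper--Kleiman construction over an algebraically closed field, define the form over a general $k$ by base change to $\overline{k}$, and deduce invariance under $k\to k'$ by extending to a map of algebraic closures and using invariance both under isomorphisms of algebraically closed fields and under base extension between algebraically closed fields (via uniqueness of the form). The extra details you supply (the permutation argument for symmetry, flat base change as an alternative to uniqueness) are consistent elaborations of the same argument, not a different approach.
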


\subsection{Polarizations and degree functions}\label{polarizations}
A polarization on a smooth projective variety $X_k$ of dimension $d$ is an ample divisor class $H$. This implies in particular that $Y\cdot H^{d-1}>0$ for all effective divisors $Y$. 

Let $f\in\Rat(X_k)$ and denote by $\dom(f)$ the maximal open subset of $X_k$ on which $f$ is defined. The {\it graph} $\Gamma_f$ of $f$ is the closure of $\{(x,f(x)\mid x\in\dom(f))\}\subset X_k\times X_k$. Let $p_1$ and $p_2\colon\Gamma_f\to X$ be the natural projections on the first respectively second factor, then $f=p_2\circ p_1^{-1}$. Note that $p_1$ is birational. The {\it total transform} of a divisor $D$ under $f$ is the divisor ${p_1}_*(p_2^*D)$, where ${p_2}^*D$ is the pullback of $D$ as a Cartier divisor and ${p_1}_*(p_2^*D)$ is the pushforward of ${p_2}^*D$ as a Weil divisor. Note that if $k$ is algebraically closed and $D$ is effective, then $f^*D$ is the closure of all the points in $\dom(f)$ that are mapped to $D$ by $f$.

\subsection{Degrees in the case of complex varieties}\label{complex}
Let $X_{\C}$ be a smooth complex projective variety of dimension $d$. The following constructions and results can be found in \cite{MR2095471}. Recall that two divisors $D_1$ and $D_2$ are numerically equivalent if $D_1\cdot\gamma=D_2\cdot\gamma$ for all curves $\gamma$ on $X$. Denote by $N^1(X)$ the Neron-Severi group, which is the group of divisors modulo numerical equivalence.  The intersection number of divisors $D_1,\dots, D_d$ is invariant under numerical equivalence. By taking the first Chern class one can embed $N^1(X_{\C})$ into $H^2(X;\Z)_{t.f.}$, which is $H^2(X_{\C};\Z)$ modulo its torsion part. It turns out that $N^1(X_{\C})=H^2(X_{\C};\Z)_{t.f.}\cap H^{1,1}(X_{\C})$. Let $D_1,\dots, D_d$ be divisors and $\omega_1,\dots, \omega_d$ the corresponding $(1,1)$-forms, then
\[
D_1\cdot D_2\cdots D_d=\int_{X_{\C}}\omega_1\wedge \omega_2\wedge\cdots\wedge\omega_d.
\]
A divisor is ample if and only if the corresponding $(1,1)$-form is cohomologous to a K\"ahler form. Hence a polarization of a smooth complex projective variety can be seen as a K\"ahler class. In this section we will take this point of view.

Let $f$ be a birational transformation of $X$. With the help of currents, one can define the pullback $f^*\omega$ of a $(1,1)$-form $\omega$ (see \cite{MR2851870}). If $H$ is a divisor of $X$ and $\omega$ the corresponding $(1,1)$-form then $f^*\omega$ is  the form corresponding to $f^*H$. 

If we fix a polarization $\omega_{X_{\C}}$ of smooth complex projective variety $X_{\C}$, then the degree of a birational map $f\in\Bir(X_{\C})$ is defined by
\[
\deg_{\omega_{X_{\C}}}(f)=\int_{X_{\C}}f^*\omega_{X_{\C}}\wedge\omega_{X_{\C}}\wedge\cdots\wedge \omega_{X_{\C}}.
\]
So the degree does not depend on whether we look at an ample divisor class or its corresponding K\"ahler class.

In  \cite{MR2180409} the following is shown (see also \cite{MR2851870}): 

\begin{proposition}\label{dinhnguyen}
Let $X_{\C}$ be a smooth complex projective variety with polarization $\omega_{X_{\C}}$ and $f\in\Bir(X_{\C})$. There exists a constant $A>0$ such that
\[
A^{-1}\deg_{\omega_{X_{\C}}}(f^n)\leq \|(f^n)^*\|\leq A\deg_{\omega_{X_{\C}}}(f^n),
\] 
where $\|\,\cdot\,\|$ denotes any norm on $\End(H^{1,1}(X_{\C}))$.
\end{proposition}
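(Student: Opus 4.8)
The plan is to show that the operator norm of $(f^n)^*$ acting on $H^{1,1}(X_{\C})$ is comparable, up to a multiplicative constant independent of $n$, to the degree $\deg_{\omega_{X_{\C}}}(f^n) = \int_{X_{\C}} (f^n)^*\omega_{X_{\C}} \wedge \omega_{X_{\C}}^{\wedge(d-1)}$. The key observation is that $H^{1,1}(X_{\C})$ is a finite-dimensional real vector space, so all norms on $\End(H^{1,1}(X_{\C}))$ are equivalent; it therefore suffices to produce the bound for one convenient choice of norm. I would define a norm tailored to the geometry by fixing a basis of $H^{1,1}(X_{\C})$ consisting of classes of divisors, or better, by exploiting the fact that the K\"ahler class $\omega_{X_{\C}}$ together with the intersection pairing singles out a natural way to measure $(1,1)$-classes.

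First I would set up the ``degree against $\omega_{X_{\C}}$'' as a linear functional on $H^{1,1}(X_{\C})$, namely $\alpha \mapsto \int_{X_{\C}} \alpha \wedge \omega_{X_{\C}}^{\wedge(d-1)}$, and note that $\deg_{\omega_{X_{\C}}}(f^n)$ is exactly this functional evaluated at $(f^n)^*\omega_{X_{\C}}$. The upper bound $\deg_{\omega_{X_{\C}}}(f^n) \leq A\,\|(f^n)^*\|$ is the easy direction: since $\omega_{X_{\C}}$ is a fixed class, the functional $\alpha \mapsto \int \alpha \wedge \omega_{X_{\C}}^{\wedge(d-1)}$ is bounded, so $\deg_{\omega_{X_{\C}}}(f^n) = |\langle (f^n)^*\omega_{X_{\C}},\,\omega_{X_{\C}}^{\wedge(d-1)}\rangle| \leq C\,\|(f^n)^*\omega_{X_{\C}}\| \leq C\,\|(f^n)^*\|\cdot\|\omega_{X_{\C}}\|$, and $\|\omega_{X_{\C}}\|$ is an absolute constant. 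For the lower bound $\|(f^n)^*\| \leq A\,\deg_{\omega_{X_{\C}}}(f^n)$ the plan is to use positivity: the pullback $(f^n)^*$ preserves the cone of pseudoeffective (or nef) classes, and for a class $\alpha$ in this cone, its intersection against powers of the K\"ahler class $\omega_{X_{\C}}$ controls its norm from above. Concretely, I would show that for a pseudoeffective $(1,1)$-class $\alpha$, one has $\|\alpha\| \leq C'\int_{X_{\C}} \alpha \wedge \omega_{X_{\C}}^{\wedge(d-1)}$, because on the finite-dimensional space $H^{1,1}$ the linear functional $\int(\,\cdot\,)\wedge\omega_{X_{\C}}^{\wedge(d-1)}$ is strictly positive on the closed salient pseudoeffective cone and hence, by compactness of the cross-section of the cone, bounds the norm there.

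The main obstacle will be controlling the full operator norm of $(f^n)^*$ rather than just its effect on the single class $\omega_{X_{\C}}$. The operator norm involves the action on all of $H^{1,1}(X_{\C})$, whereas $\deg_{\omega_{X_{\C}}}(f^n)$ only records the image of $\omega_{X_{\C}}$. To bridge this gap I would invoke the fact that $(f^n)^*$ is a positive operator for the pseudoeffective cone, so its norm is attained essentially on the cone, and every class in a fixed generating set for $H^{1,1}$ can be written as a difference of two classes each dominated by a fixed multiple of $\omega_{X_{\C}}$ (since $\omega_{X_{\C}}$ is ample, a large multiple $N\omega_{X_{\C}} - D$ is again nef for any fixed divisor $D$). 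Applying $(f^n)^*$, monotonicity of pullback on the nef cone gives $(f^n)^*D \preceq N (f^n)^*\omega_{X_{\C}}$, and pairing against $\omega_{X_{\C}}^{\wedge(d-1)}$ then bounds $\|(f^n)^*D\|$ in terms of $\deg_{\omega_{X_{\C}}}(f^n)$. Running this over a finite basis of $H^{1,1}$ yields the desired operator-norm bound with a uniform constant, and combining the two directions gives the stated double inequality with a single constant $A$. The delicate point throughout is ensuring all positivity and monotonicity statements for pullback of $(1,1)$-classes under a birational (not necessarily holomorphic) map are legitimate, which is precisely where I would lean on the current-theoretic construction of $f^*\omega$ recalled above and the results of \cite{MR2180409, MR2851870}.
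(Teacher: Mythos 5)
The paper does not actually prove this proposition: it is imported verbatim from Dinh--Sibony, with the text reading ``In \cite{MR2180409} the following is shown (see also \cite{MR2851870})'', so there is no in-paper argument to compare against. Your sketch is essentially a reconstruction of the standard proof from that reference, and it is correct in outline: the easy direction pairs $(f^n)^*\omega_{X_{\C}}$ against the fixed class $\omega_{X_{\C}}^{\wedge(d-1)}$, and the substantive direction uses that $(f^n)^*$ preserves the pseudoeffective cone, that the mass functional $\alpha\mapsto\int\alpha\wedge\omega_{X_{\C}}^{\wedge(d-1)}$ is strictly positive on that closed salient cone (hence comparable to any norm on it, by compactness of a cross-section), and that every class in a fixed basis is squeezed between multiples of $\omega_{X_{\C}}$ so that monotonicity transfers the bound from $\omega_{X_{\C}}$ to the full operator norm. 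Two points deserve care if you were to write this out. First, the relevant invariant cone is the pseudoeffective one, not the nef one: writing $f^*=(p_1)_*p_2^*$ via the graph, $p_2^*$ preserves nefness but the pushforward $(p_1)_*$ only preserves pseudoeffectivity, so your parenthetical ``(or nef)'' should be dropped and the squeezing argument $-N\omega_{X_{\C}}\preceq D\preceq N\omega_{X_{\C}}$ should be run in the psef order, with $N$ chosen once for the finite basis. Second, the monotonicity step $(f^n)^*D\preceq N(f^n)^*\omega_{X_{\C}}$ is exactly linearity of $(f^n)^*$ on classes plus preservation of the psef cone, which is where the current-theoretic definition of the pullback and the compatibility $(f^n)^*=((f^*)\text{ iterated})$ at the level of classes must be checked; this is the genuinely delicate content of \cite{MR2180409}. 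With those caveats your plan is sound, and it supplies the proof the paper delegates to the literature.
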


The following corollary follows directly from Proposition \ref{dinhnguyen} and proves Proposition \ref{indeppolar}:

\begin{corollary}\label{cordinhnguyen}
Let $X_{\C}$ be a smooth complex projective variety, then 
\[
\dpol(f)=\limsup_{n\to\infty}\frac{\log(\|(f^n)^*\|)}{\log(n)}.
\]
In particular, $\dpol(f)$ does not depend on the choice of polarization. 
\end{corollary}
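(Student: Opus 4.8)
The statement to prove is Corollary~\ref{cordinhnguyen}, which rewrites the order of growth $\dpol(f)$ in terms of the operator norm $\|(f^n)^*\|$ and deduces independence of polarization.

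\medskip

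The plan is to derive the corollary directly from Proposition~\ref{dinhnguyen} by taking logarithms and passing to the $\limsup$. First I would fix a polarization $\omega_{X_{\C}}$ and invoke Proposition~\ref{dinhnguyen} to obtain a constant $A>0$ with
\[
A^{-1}\deg_{\omega_{X_{\C}}}(f^n)\leq \|(f^n)^*\|\leq A\deg_{\omega_{X_{\C}}}(f^n).
\]
Since all quantities involved are positive for $n$ large (because $\deg_{\omega_{X_{\C}}}(f^n)\geq 1$ and the norm is bounded below by a positive constant), I would apply $\log$ throughout to get
\[
\log(A^{-1})+\log\big(\deg_{\omega_{X_{\C}}}(f^n)\big)\leq \log\big(\|(f^n)^*\|\big)\leq \log(A)+\log\big(\deg_{\omega_{X_{\C}}}(f^n)\big).
\]

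\medskip

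Next I would divide by $\log(n)$ and take $\limsup_{n\to\infty}$. The key point is that the additive constants $\pm\log(A)$ contribute terms $\log(A)/\log(n)$ which tend to $0$ as $n\to\infty$, and hence are invisible to the $\limsup$. This yields
\[
\limsup_{n\to\infty}\frac{\log\big(\|(f^n)^*\|\big)}{\log(n)}=\limsup_{n\to\infty}\frac{\log\big(\deg_{\omega_{X_{\C}}}(f^n)\big)}{\log(n)}=\dpol(f),
\]
the last equality being the definition of $\dpol(f)$. This establishes the displayed formula in the corollary.

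\medskip

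Finally, to conclude independence of polarization, I would observe that the right-hand side of the just-proved identity, namely $\limsup_{n\to\infty}\log\big(\|(f^n)^*\|\big)/\log(n)$, makes no reference to $\omega_{X_{\C}}$: the operator $(f^n)^*$ acts on the fixed space $H^{1,1}(X_{\C})$, and any two norms on this finite-dimensional space are equivalent, so the $\limsup$ is independent of the choice of norm as well. Since $\dpol(f)$, computed with respect to an arbitrary polarization, always equals this intrinsic quantity, it cannot depend on the polarization, proving Proposition~\ref{indeppolar}. I do not expect any serious obstacle here; the only point requiring a word of care is the routine remark that the additive logarithmic constant vanishes in the limit and that finite-dimensionality makes the choice of norm irrelevant.
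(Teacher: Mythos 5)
Your proof is correct and follows exactly the route the paper intends: the paper simply asserts that the corollary ``follows directly from Proposition~\ref{dinhnguyen},'' and your argument (take logarithms in the two-sided bound, divide by $\log(n)$, observe the additive constant vanishes in the $\limsup$, and use equivalence of norms on the finite-dimensional space $\End(H^{1,1}(X_{\C}))$ for polarization-independence) is precisely that direct deduction, spelled out.
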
 

\begin{proposition}\label{propfede}
Let $X_{\C}$ and $Y_{\C}$ be complex smooth projective varieties of dimension $d$, let $f\in\Bir(X_{\C})$, $g\in\Bir(Y_{\C})$ and $\pi\colon X\to Y$ a dominant morphism of finite degree $k$, such that $\pi\circ f=g\circ \pi$. Then $\dpol(f)=\dpol(g)$.
\end{proposition}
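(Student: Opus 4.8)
The plan is to reduce the statement to a comparison of the two degree sequences $\deg_{\omega_X}(f^n)$ and $\deg_{\omega_Y}(g^n)$ and to carry out this comparison by pulling the polarization of $Y$ back to $X$. By Corollary~\ref{cordinhnguyen} the invariant $\dpol$ does not depend on the chosen polarization, so I am free to compute $\dpol(f)$ with whatever ample class on $X$ is convenient, and likewise $\dpol(g)$ with a fixed K\"ahler class $\omega_Y$ on $Y$. First I would record that the semiconjugacy iterates, $g^n\circ\pi=\pi\circ f^n$ for all $n$, by a one-line induction using that $\pi$ is a morphism.

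The heart of the argument is the identity
\[
\int_X (f^n)^*(\pi^*\omega_Y)\wedge(\pi^*\omega_Y)^{d-1}=k\,\deg_{\omega_Y}(g^n).
\]
To obtain it I would first use that pull-back by the rational map $f^n$ followed by the morphism $\pi$ is functorial, i.e. $(\pi\circ f^n)^*\omega_Y=(f^n)^*\pi^*\omega_Y$; this is exact because $\pi$, being a morphism, has empty indeterminacy locus, as one checks by resolving $f^n$ and composing with $\pi$. Hence the left-hand side equals $\int_X(g^n\circ\pi)^*\omega_Y\wedge(\pi^*\omega_Y)^{d-1}$. I would then evaluate this intersection number on a smooth model $\sigma\colon Z\to X$ resolving $g^n\circ\pi$: writing $\mu=\pi\circ\sigma\colon Z\to Y$, a morphism of degree $k$, and passing to a model $W\to Y$ on which $g^n$ lifts to a morphism, the projection formula for $\sigma$ together with the degree formula $\int_Z\nu^*(-)=k\int_W(-)$ for the induced generically finite morphism $\nu\colon Z\to W$ turns the expression into $k\int_Y(g^n)^*\omega_Y\wedge\omega_Y^{d-1}$. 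In this computation the discrepancy between $(g^n\circ\pi)^*\omega_Y$ and $\pi^*(g^n)^*\omega_Y$, an effective class supported on the $\pi$-contracted divisors, never contributes, since such a divisor $E$ has image of dimension $\le d-2$ in $Y$ and therefore satisfies $\int_E(\pi^*\omega_Y)^{d-1}=0$.

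With the identity in hand I would sandwich the common value $L_n:=\int_X (f^n)^*(\pi^*\omega_Y)\wedge(\pi^*\omega_Y)^{d-1}$ between two genuine degree sequences on $X$, using that $\pi^*\omega_Y$ is nef and big (nef as a pull-back, big since $(\pi^*\omega_Y)^d=k\,\omega_Y^d>0$). On the one hand, for a K\"ahler class $\omega_X$ on $X$ there is a constant $b>0$ with $\pi^*\omega_Y\le b\,\omega_X$ in the nef cone; since pull-back by $f^n$ preserves the pseudo-effective cone and nef classes pair non-negatively with pseudo-effective ones, this yields $L_n\le b^d\deg_{\omega_X}(f^n)$. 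On the other hand, Kodaira's lemma writes $\pi^*\omega_Y\equiv A+\Delta$ with $A$ ample and $\Delta$ effective, and the same positivity principles, now isolating the effective part and integrating over its support, give $L_n\ge\deg_A(f^n)$. Combining these with the identity produces the two chains $\deg_A(f^n)\le k\,\deg_{\omega_Y}(g^n)$ and $k\,\deg_{\omega_Y}(g^n)\le b^d\deg_{\omega_X}(f^n)$.

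Finally I would apply $\limsup_n\log(-)/\log n$ to both chains. By Corollary~\ref{cordinhnguyen} the first gives $\dpol(f)=\dpol_A(f)\le\dpol(g)$ and the second gives $\dpol(g)\le\dpol_{\omega_X}(f)=\dpol(f)$, whence $\dpol(f)=\dpol(g)$. I expect the main obstacle to be the second paragraph: making the functoriality of pull-back rigorous through a morphism that may contract divisors, and verifying that the resulting $\pi$-exceptional correction is annihilated by $(\pi^*\omega_Y)^{d-1}$. The positivity estimates of the third paragraph should be routine once one works with closed positive currents and restricts to the support of $\Delta$.
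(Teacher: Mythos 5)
Your argument takes a genuinely different route from the paper's. The paper never forms the exact identity for $L_n$ and never invokes Kodaira's lemma; it bounds $\deg_{\omega_{Y}}(g^n)\leq K\,\|(f^n)^*\|$ and $\deg_{\omega_{X}}(f^n)\leq K'\,\|(g^n)^*\|$ by transporting classes through $\pi$ and estimating the intersection pairing by norms, and then converts the operator norms back into degrees via Proposition \ref{dinhnguyen}. Within your scheme, the first two paragraphs and the upper bound are fine: $(\pi\circ f^n)^*=(f^n)^*\circ\pi^*$ holds exactly since $\pi$ is a morphism, the discrepancy between $(g^n\circ\pi)^*\omega_Y$ and $\pi^*(g^n)^*\omega_Y$ is $\pi$-exceptional and is annihilated by $(\pi^*\omega_Y)^{d-1}$, and writing $b\,\omega_X=\pi^*\omega_Y+N$ with $N$ nef and telescoping the $(d-1)$-st powers gives $L_n\leq b^d\deg_{\omega_X}(f^n)$, since every correction term is a product of nef classes paired against a pseudo-effective one.

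The gap is the lower bound $L_n\geq\deg_A(f^n)$, and it is not a routine positivity check --- it is the essential difficulty of the proposition. Two things go wrong with the argument as sketched. First, expanding $(A+\Delta)^{d-1}$ produces terms $A^{d-1-j}\cdot\Delta^{j}$ with $j\geq 2$; a self-intersection of an effective, non-nef divisor need not be pseudo-effective (already $\Delta^2<0$ for a $(-1)$-curve), so the cross terms cannot be discarded. Second, the cleaner inequality you would need, namely $\beta\cdot(\pi^*\omega_Y)^{d-1}\geq c\,\beta\cdot A^{d-1}$ for pseudo-effective $\beta$, is false: take $\beta$ to be the class of a $\pi$-contracted divisor $E$; then $E\cdot(\pi^*\omega_Y)^{d-1}=\pi_*E\cdot\omega_Y^{d-1}=0$ while $E\cdot A^{d-1}>0$. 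Since $\pi$ in this proposition is only generically finite (in the application of Example \ref{oguiso} it does contract divisors), $\pi^*\omega_Y$ is nef and big but not ample, and its top powers genuinely annihilate part of the pseudo-effective cone; one must rule out that $(f^n)^*A$ concentrates there. Controlling this is exactly the content of the Dinh--Sibony/Dinh--Nguyen comparison $\|(f^n)^*\|\leq A\deg_{\omega_X}(f^n)$ that the paper cites as Proposition \ref{dinhnguyen} and uses as a black box. The cleanest repair of your proof is to replace the Kodaira-lemma step by that citation: from $k\deg_{\omega_Y}(g^n)=L_n\le K\|(f^n)^*\|$ (pairing estimated by norms) and Proposition \ref{dinhnguyen} you get $\dpol(g)\le\dpol(f)$, and the symmetric estimate with $\pi_*$ gives the other inequality.
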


\begin{proof}
Let ${\omega_{X_{\C}}}$ be a K\"ahler form on $X_{\C}$ and $\omega_{Y_{\C}}$ be a K\"ahler form on $Y_{\C}$. We have
\begin{equation*}
\begin{split}
\deg_{\omega_{X_{\C}}}(f^n)&=\int_{X_{\C}}(f^n)^*\omega_{X_{\C}}\wedge\omega_{X_{\C}}^{d-1}=\deg(\pi)\int_{Y_{\C}}\pi_*(f^n)^*\omega_{X_{\C}}\wedge\pi_*\omega_{X_{\C}}^{d-1}\\
&=\deg(\pi)\int_{Y_{\C}}(g^n)^*\pi_*\omega_{X_{\C}}\wedge\pi_*\omega_{X_{\C}}^{d-1}\\
&\leq K_1\deg(\pi)\cdot\|(g^n)^*\|\cdot\|\pi_*\omega_{X_{\C}}\|\cdot\|\pi_*\omega_{X_{\C}}^{d-1}\|\leq K_2\cdot \|(g^n)^*\|,
\end{split}
\end{equation*}
for some positive constants $K_1, K_2$ not depending on $n$. By Corollary \ref{cordinhnguyen}, this yields $\dpol(f)\leq\dpol(g)$.

On the other hand, we have 
\begin{equation*}
\begin{split}
\deg_{\omega_{Y_{\C}}}(g^n)&=\int_{Y_{\C}}(g^n)^*\omega_{Y_{\C}}\wedge\omega_{Y_{\C}}^{d-1}=\frac{1}{\deg(\pi)}\int_{X_{\C}}\pi^*(g^n)^*\omega_{Y_{\C}}\wedge\pi^*\omega_{Y_{\C}}^{d-1}\\
&=\frac{1}{\deg(\pi)}\int_{X_{\C}}(f^n)^*\pi^*\omega_{Y_{\C}}\wedge\pi^*\omega_{Y_{\C}}^{d-1}\\
&\leq K_1'\frac{1}{\deg(\pi)}\|(f^n)^*\|\cdot\|\pi^*\omega_{Y_{\C}}\|\cdot\|\pi^*\omega_{Y_{\C}}^{d-1}\|\leq K_2'\cdot \|(f^n)^*\|,
\end{split}
\end{equation*}
for some positive constants $K_1', K_2'$ not depending on $n$ and therefore, by Corollary~\ref{cordinhnguyen},  $\dpol(g)\leq\dpol(f)$.

\end{proof}

\section{Proofs}
\subsection{Proof of Theorem \ref{rationalcount}}
Let $k$ be a field, $X_k$ a smooth projective variety defined over $k$, $H$ a polarization of $X_k$ and $\Delta\subset\Rat(X_k)$ a finitely generated monoid of rational dominant transformations with generating set $T$. Then $X_k$, $H$ and  $T$ are defined by a finite set $S$ of coefficients from $k$. Let $k'\subset k$ be the field $\F_p(S)$, where $p=\car(k)$, or $k'=\Q(S)$ if $\car(k)=0$. By Proposition \ref{polarprop}, the degree of elements in $\Gamma$ considered as rational dominant transformations of $X_k$ with respect to the polarization $H$ is the same as the degree of elements in $\Delta$ considered as rational dominant transformations of $X_{k'}$. So without loss of generality, we may assume that $\Delta$ is a submonoid of $\Rat(X_{k'})$, where $k'$ is a finite field extension of $\F_p$ or of~$\Q$. 

As in the proof of Proposition \ref{countablesubgroups}, we use the fact that there are only countably many isomorphism classes of such varieties. Polarizations and rational self maps are defined by a finite set of coefficients, so the cardinality of the set of all  $(k',X_{k'},H,T)$ up to isomorphism, where $k'$ is a finitely generated field extension of $\F_p$ or $\Q$, $X_{k'}$ a smooth projective variety over $k'$, $H$ a polarization of $X_{k'}$ and  a finite set of elements in $\Rat(X_{k'})$ is countable. It follows in particular that the set of all degree sequences is countable.

\subsection{Proof of Proposition \ref{aut1}}\label{proofaut1}
Let $k$ be a field and $f\in\Bir(\p_k^d)$ a birational map that is given by $[f_0:\dots:f_d]$, with respect to homogeneous coordinates $[x_0:\dots:x_d]$, where the $f_i$ are homogenous polynomials of the same degree without common factors. There are two important closed subsets of $\p_k^d$ associated to $f$, the indeterminacy locus $\Ind(f)$, consisting of all the points where $f$ is not defined and the exceptional divisor $\exc(f)$, the set of all the points where $f$ is not a local isomorphism. If $f$ is not an automorphism, the indeterminacy locus is a closed set of codimension $\geq 2$ and the exceptional divisor a closed set of codimension 1. Note that $\Ind(f)$ is exactly the set of points, where all the $f_i$ vanish. Let $X\subset \p_k^d$ be an irreducible closed set that is not contained in $\Ind(f)$. We denote by $f(X)$ the closure of $f(X\setminus \Ind(f))$ and we say that $f$ {\it contracts} $X$ if $\dim(f(X))<\dim(X)$. 

The following lemma is well known (see for example \cite{zbMATH00663864}):
\begin{lemma}\label{deglemma}
Let $k$ be a field and $g,f\in\Bir(\p_k^d)$. Then $\deg(f\circ g)\leq\deg(f)\deg(g)$ and $\deg(f\circ g)<\deg(f)\deg(g)$ if and only if $g$ contracts a hypersurface to a subset of $\Ind(f)$.
\end{lemma}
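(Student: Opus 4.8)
The plan is to prove both the degree inequality and the characterization of strict inequality by passing to the graph/resolution picture, where the behavior of $f^*$ on divisor classes is controlled by how $g$ moves hypersurfaces. First I would write $f$ and $g$ in terms of their defining homogeneous polynomials: say $g=[g_0:\dots:g_d]$ with $\deg g_i = \deg(g)=e$ and $f=[f_0:\dots:f_d]$ with $\deg f_i = \deg(f)=m$. The composition $f\circ g$ is then represented by $[f_0(g_0,\dots,g_d):\dots:f_d(g_0,\dots,g_d)]$, where each entry is a homogeneous polynomial of degree $me=\deg(f)\deg(g)$. These polynomials may, however, share a common factor, and $\deg(f\circ g)$ is precisely $me$ minus the degree of the greatest common factor of the $d+1$ entries. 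This immediately gives $\deg(f\circ g)\leq \deg(f)\deg(g)$, so the inequality is the easy half.

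The substance is the equality case. I would argue that a common factor $P$ of positive degree appears in all the $f_i(g_0,\dots,g_d)$ exactly when $g$ contracts the hypersurface $\{P=0\}$ into the indeterminacy locus $\Ind(f)=\{f_0=\dots=f_d=0\}$. The cleanest way to see this is geometrically: a hypersurface $V=\{P=0\}$ divides the entries of $f\circ g$ precisely when $P$ vanishes on the preimage of $\Ind(f)$, i.e. when $g(V)\subset\Ind(f)$; and since $\Ind(f)$ has codimension $\geq 2$ (as $f$ is not an automorphism, by the discussion preceding the lemma), sending the codimension-$1$ set $V$ into $\Ind(f)$ means exactly that $g$ drops the dimension of $V$, i.e. $g$ contracts $V$. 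Conversely, if $g$ contracts some hypersurface $V$ to a subset of $\Ind(f)$, then the $f_i(g_0,\dots,g_d)$ all vanish along $V$, so its defining polynomial divides each of them, forcing cancellation and hence strict inequality.

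I expect the main obstacle to be making the equivalence ``common factor $\iff$ contraction into $\Ind(f)$'' fully rigorous, in particular handling the bookkeeping of multiplicities and ensuring the correspondence between the arithmetic of the polynomial factorization and the geometry of images is tight in both directions over an arbitrary (not necessarily algebraically closed) field $k$. One has to be careful that the common factor could a priori vanish along a set mapped into $\exc(f)$ rather than $\Ind(f)$, or along components where $f$ is still a local isomorphism; the point is that cancellation in the coordinate expression of $f\circ g$ can only be caused by the $f_i$ simultaneously vanishing, which happens only on $\Ind(f)$. To keep the argument clean I would base-change to $\overline{k}$ (the degree is insensitive to this by Proposition \ref{polarprop}), so that the geometric statements about images and dimensions are literally about closed points, and then quote the standard fact (as in \cite{zbMATH00663864}) that the degree of $f\circ g$ is $\deg(f)\deg(g)$ minus the total degree of the cancelled factor. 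Since the lemma is explicitly flagged as well known, I would aim for a short argument rather than a from-scratch development, leaning on the graph construction of Section \ref{polarizations} to identify the pullback $f^*H$ with the total transform and thereby tie the degree drop directly to contracted hypersurfaces.
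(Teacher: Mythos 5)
Your proposal is correct and follows essentially the same route as the paper: express the composition via the homogeneous polynomials $f_i(g_0,\dots,g_d)$, observe that a degree drop is exactly the appearance of a common factor, and identify the vanishing locus of that factor as a hypersurface sent by $g$ into $\Ind(f)$ (which, having codimension $\geq 2$, forces contraction), with the converse following because the $f_i(g_0,\dots,g_d)$ all vanish on any hypersurface contracted into $\Ind(f)$. The only difference is cosmetic: you flag the reduction to $\overline{k}$ and the graph picture explicitly, which the paper leaves implicit.
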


\begin{proof}
Let $f=[f_0:\dots:f_d]$ and $g=[g_0:\dots:g_d]$. Then $\deg(f\circ g)<\deg(f)\deg(g)$ if and only if the polynomials $f_0(g_0,\dots, g_d),\dots, f_d(g_0,\dots, g_d)$ have a non constant common factor $h\in k[x_0,\dots, x_d]$. Let $M\subset \p^d_k$ be the hypersurface defined by $h=0$. Then $f$ is not defined at $g(M)$. This implies that the codimension of $g(M)$ is $\geq 2$ and therefore that $g$ contracts $M$ to a subset of $\Ind(f)$. 

On the other hand, let $M$ be an irreducible component of a hypersurface that is contracted by $g$ to a subset $\Ind(f)$. Assume that $M$ is the zero set of an irreducible polynomial $h$. Since $g(M)\subset \Ind(f)$, we obtain that $f_0(g_0,\dots, g_d),\dots, f_d(g_0,\dots, g_d)$ vanish all on $M$ and therefore that $h$ divides $f_0(g_0,\dots, g_d),\dots, f_d(g_0,\dots, g_d)$. This implies $\deg(f\circ g)<\deg(f)\deg(g)$.
\end{proof}

In order to prove Proposition \ref{aut1}, we consider an element $f\in\aut(\A_k^d)$ as a birational transformation of $\p^d_k$ whose exceptional divisor is the hyperplane at infinity $H:=\p_k^d\setminus\A_k^d$ and whose indeterminacy points are contained in $H$.

Note that $\deg(f^d)=\deg(f)^d$ implies $\deg(f^l)=\deg(f)^l$ for $l=1,\dots, d$. We look at $f$ as an element of $\Bir(\p_k^d)$. If $f$ is an automorphism of $\p_k^d$, its degree is $1$ and the claim follows directly. Otherwise, $f$ contracts the hyperplane $H$. By Lemma \ref{deglemma}, $\deg(f^{l+1})=\deg(f^l)\deg(f)$ is equivalent to $f^l(H)$ not being contained in $\Ind(f)$. In particular, if $\deg(f^l)=\deg(f)^l$ and $f^l(H)=f^{l+1}(H)$ for some $l$ then $\deg(f^i)=\deg(f)^i$ for all $i\geq l$.

Let $1\leq l\leq d$. By Lemma \ref{deglemma}, $f^l(H)$ is not contained in $\Ind(f)$. Observe that $f^l(H)$ is irreducible and $f^{l+1}(H)\subset f^{l}(H)$. This implies that $\dim(f^{l+1}(H))\leq \dim(f^l(H))$ and $\dim(f^{l+1}(H))= \dim(f^l(H))$ if and only if $f^{l+1}(H)= f^{l}(H)$. It follows that the chain $H\supset f(H)\supset f^2(H)\supset\cdots$ becomes stationary within the first $d$ iterations. In particular, $f^d(H)=f^{d+i}(H)$ for all $i$ and therefore $\deg(f^i)=\deg(f)^i$ for all $i$.

\subsection{Proof of Theorem \ref{degaut} and Theorem \ref{degfinite}}
We start by proving Theorem~\ref{degaut}. Let $f\in\End(\A_k^d)$ be an endomorphism such that the sequence $\{\deg(f^n)\}$ is unbounded. Our first remark is that the elements of $\{f^n\}$ are linearly independent in the vector space of polynomial endomorphisms $\End(\A_k^d)$. Indeed, if for any $n$ we have
\[
f^n=\sum_{l<n}c_lf^l,
\]
for some $c_1,\dots, c_{n-1}\in k$. It follows by induction that  $\deg(f^{n+i})$ is smaller or equal to $\max_{l<n}\deg(f^l)$  for all~$i$ . 

Denote by $\End(\A^d)_{\leq K}$ the $k$-vector space of polynomial endomorphisms of degree $\leq K$, which has dimension $d\cdot\binom{d+K}{K}$. One calculates
\[
d\cdot\binom{d+K}{K}< \frac{1}{(d-1)!}(K+d)^{d}\leq \frac{(1+d/K)^{d}}{(d-1)!}K^{d}\leq C_dK^{d},
\]
where $C_d=\frac{(1+d)^{d}}{(d-1)!}$. Since the elements in $\{f^k\}\cap\End(\A^d)_{\leq K}$ are linearly independent, the cardinality of $\{f^k\}\cap\End(\A^d)_{\leq K}$ is at most $C_dK^{d-1}$. 

For the proof of Theorem \ref{degfinite} note that in the case of finite fields, there are only finitely many birational transformations of a given degree. If $f^l=f$ for some $l>1$, then $\{\deg(f^n)\}$ is bounded. There are $\binom{d+K}{K}$ monomials of degree $\leq K$. A birational transformation of degree $\leq K$ is given by $d+1$ polynomials of degree $\leq K$, so by $C(K,d)=(d+1)\binom{d+K}{K}$ coefficients from $\F_q$. Hence there are less than $q^{C(K,d)}$ birational transformations of degree $\leq K$. This proves the claim.

\subsection{Proof of Example \ref{exaut}}\label{proofexaut}
Let $d$ be an integer and $l=\lfloor d/2 \rfloor$. For $d=3$ the automorphism $f_3:=(x+z(y+xz), y+xz,z)$ from Example \ref{linearex} satisfies $\deg(f_3^n)\sim n$. Moreover, the first coordinate of $f_3^n$ is the coordinate with highest degree. Assume now that $d\geq 5$ and that we are given an automorphism $f_{d-2}\in\aut(\A^{d-2})$ such that $\deg(f_{d-2}^n)$ grows like $n^{l-1}$ and that the first coordinate of $f_{d-2}^n$ is the entry with highest degree. Let 
\[
f_d:=(x_1+x_3(x_2+x_1x_3), x_2+x_1x_3, f_{d-2}(x_3,\dots,x_{d})).
\]
One sees by induction that the degree of $f_d^n$ grows like $n^l$ and that the first coordinate of $f_{d}^n$ is the coordinate of $f_d^n$ with highest degree.

\subsection{Proof of Example \ref{oguiso}}\label{oguisoproof}
In this section we use the notation introduced in Example \ref{oguiso}.

Let $dz_1, dz_2, dz_3$ be a basis of $H^{1,0}(X)$. Then $d\bar{z_1},d\bar{z_2}, d\bar{z_3}$ is a basis of $H^{0,1}(X)$. The automorphism $f^n$ induces an action on both $H^{1,0}(X)$ and $H^{0,1}(X)$ whose norm grows like $n^2$. Since $H^{1,1}(X)=H^{1,0}(X)\otimes H^{0,1}(X)$, this implies that the norm of the induced action of $f^n$ on $H^{1,1}(X)$ grows like $n^4$. By Corollary \ref{cordinhnguyen}, we obtain $\dpol(f)=4$.

Denote by $\pi\colon X\to Y=X/s$ the quotient map. Let $\tilde{X}$ be a smooth projective variety, $\phi_2\colon\tilde{X}\to X$ a birational morphism and $\psi\colon\tilde{X}\to \tilde{Y}$ a dominant morphism such that the following diagram commutes:

\begin{center}
\begin{tikzcd}
\tilde{X} \arrow{r}{\psi} \arrow{d}{\phi_2}&\tilde{Y} \arrow{d}{\phi_1}
\\X \arrow{r}{\pi}&Y. 
\end{tikzcd}
\end{center}
Note that $\psi$ is generically finite. By Proposition \ref{propfede}, we have $\dpol(\phi_2^{-1}\circ f\circ\phi_2)=\dpol(f)=4$ and hence, again by Proposition \ref{propfede}, $\dpol(\tilde{f})=\dpol(\psi\circ \phi_2^{-1}\circ f\circ\phi_2\circ\psi^{-1})=4$, which proves the claim of Example \ref{oguiso}.

\section{Remarks}

\subsection{Other degree functions}
One can define more general degree functions. Let $X_k$ be a smooth projective variety over a field $k$ with polarization $H$ and $1\leq l\leq d-1$, then
\[
\deg_H^l(f):=(f^*H)^l\cdot H^{d-l}.
\]
These degree functions play an important role in dynamics. In characteristic 0, we still have $\deg_H^l(fg)\leq C\deg_H^l(f)\deg_H^l(g)$ for a constant $C$ not depending on $f$ and $g$ (see \cite{MR2180409} and \cite{Truong:2015aa} for generalizations to fields of positive characteristic). 

Our proof of Theorem \ref{rationalcount} works as well if we replace the function $\deg_H$ by $\deg_H^l$ for any $l$. Let $\Gamma\subset\Bir(X_k)$ be a finitely generated group of birational transformations with a finite symmetric set of generators $S$. We define 
$D^l_{S, H}\colon\Z^+\to\Z^+$
by
$D^l_{S,H}(n):=\max_{\gamma\in B_S(n)}\{\deg_H(\gamma)\}$ and we call a map $\Z^+\to\Z^+$ that can be realized for some $(X_k, H, \Gamma, S,l)$ as such a function a {\it general degree sequence}.

\begin{theorem}
The set of all general degree sequences is countable.
\end{theorem}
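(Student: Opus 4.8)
The plan is to run the argument for Theorem~\ref{rationalcount} essentially verbatim, treating the extra parameter $l$ as a harmless finite choice. Fix a tuple $(X_k, H, \Gamma, S, l)$ realizing a general degree sequence, where $X_k$ is a smooth projective variety of dimension $d$ over a field $k$, $H$ a polarization, $\Gamma\subset\Bir(X_k)$ a finitely generated group with finite symmetric generating set $S$, and $1\leq l\leq d-1$. As in the earlier proof, $X_k$, $H$, and the finitely many elements of $S$ are defined by a finite set $T$ of coefficients in $k$. Setting $k'=\F_p(T)$ with $p=\car(k)$, or $k'=\Q(T)$ if $\car(k)=0$, we obtain a finitely generated field extension $k'$ of $\F_p$ or $\Q$ over which the entire datum is already defined.

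The only point requiring attention is that the degree function $\deg_H^l(f)=(f^*H)^l\cdot H^{d-l}$ is itself an intersection number of the divisors $f^*H$ and $H$. Since $f^*H$ is obtained from $H$ by the graph construction (pullback along one projection followed by pushforward along the other), which is compatible with base change, and since the intersection form is invariant under base change by Proposition~\ref{polarprop}, the value $\deg_H^l(\gamma)$ computed over $k$ coincides with the value computed over $k'$ for every $\gamma\in\Gamma$. Hence we may assume without loss of generality that $\Gamma\subset\Bir(X_{k'})$, exactly as in the proof of Theorem~\ref{rationalcount}.

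It then remains to count. There are only countably many isomorphism classes of finitely generated field extensions of $\F_p$ or $\Q$; over each such field there are countably many smooth projective varieties, polarizations, and finite generating sets, each specified by finitely many coefficients. For a fixed variety of dimension $d$ the parameter $l$ ranges over the finite set $\{1,\dots,d-1\}$, so incorporating $l$ only multiplies the count by a finite factor. Thus the set of tuples $(k',X_{k'},H,S,l)$ up to isomorphism is a countable union of countable sets, hence countable; and since every general degree sequence is determined by such a tuple, the conclusion follows.

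I expect no serious obstacle here: the whole content is the observation that $\deg_H^l$, like $\deg_H$, is a base-change-invariant intersection number, after which the countability bookkeeping of Theorem~\ref{rationalcount} applies unchanged. The closest thing to a subtlety is checking that the total transform $f^*H$ commutes with field base change, but this is already implicit in the proof of Theorem~\ref{rationalcount} and is covered by Proposition~\ref{polarprop}.
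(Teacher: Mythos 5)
Your proposal is correct and matches the paper's intent exactly: the paper's own proof is literally the one-line remark ``Analogous to the proof of Theorem~\ref{rationalcount},'' and you have simply written out that analogy, correctly identifying the only points needing care (base-change invariance of $\deg_H^l$ via Proposition~\ref{polarprop} and the finiteness of the extra parameter $l$).
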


\begin{proof}
Analogous to the proof of Theorem \ref{rationalcount}.
\end{proof}

\bibliographystyle{amsalpha}
\bibliography{/Users/christian/Dropbox/Literatur/bibliography_cu}

\end{document}